\newtheorem{theorem}{Theorem}
\newtheorem{lemma}[theorem]{Lemma}
\newtheorem{prop}[theorem]{Proposition}
\newtheorem{cor}[theorem]{Corollary}
\newcounter{theremark}
\numberwithin{equation}{subsection}
\begin{document}

\title[Sequences modulo one]{Sequences modulo one: convergence of local statistics}
\author{Ilya Vinogradov}
\date{\today}

\subjclass[2010]{11J71 (37A17, 11K36, 37D40)}
%***

\thanks{The research leading to these results has received funding from the European Research Council under the European Union's Seventh Framework Programme (FP/2007-2013) / ERC Grant Agreement n. 291147. }

\begin{abstract}We survey recent results beyond equidistribution of sequences modulo one. We focus on the sequence of angles in a Euclidean lattice in $\R^2$ and on the sequence $\sqrt n\bmod1 $. 
\end{abstract}

\maketitle

\tableofcontents

\section{Introduction\label{sec:intro}}

The study of randomness in number theory has been very fruitful in recent years, with new results in areas ranging from the M\"obius function to values of forms at integer points. 
% \cite{sarnak_values_1997, Eskin05quadraticforms, marklof_pair_correlation_2003,marklof_pair_correlation_II_2002,margulis_quantitative_2011, vanderkam_values_1999,vanderkam_pair_correlation_1999,VanderKam00, rudnick_pair_1998,marklof_equidistribution_2003,heath-brown_pair_2010, rudnick_sarnak_zaharescu_2001}
To prove that a deterministic sequence is random in a certain sense is typically harder than showing that another sequence lacks randomness; indeed there are very few examples of number theoretic sequences that are truly indistinguishable from a sequence of random variables. Many sequences whose statistical properties are well understood are connected to dynamical systems. In this case the problem can often be reduced to sampling an observable along a trajectory of this dynamical system, and statistical properties (or lack thereof) are inherited from the underlying dynamical system. 

%explain mobius refs

Sarnak \cite{sarnak_mobius} conjectures that the M\"obius sequence is disjoint from zero entropy systems following the heuristic of the M\"obius randomness principle \cite[Sect.~13]{iwaniec_kowalski_analytic_2004}. The conjecture is known to hold for a large class of systems, most notably for the horocycle flow on $\sltz\quot \sltr$ \cite{bourgain_distjointness_2011}, and suggests that the M\"obius function possesses a form of randomness in agreement with the Riemann Hypothesis. This phenomenon is particularly interesting since many sequences built out of the M\"obius function  possess very little randomness \cite{peckner_uniqueness_2012, cellarosi_sinai_ergodic_2013, cellarosi_vinogradov_2013, abdalaoui_dynamical_2013}. Indeed, $\{\mu^2(n)\}_{n\in \N}$ is generic for a translation on a compact abelian group. 

%explain polynomial refs

Given a real quadratic form, the set of its values at the integers can also be studied as a random object. If the form is generic, i.e.~badly approximable by rational forms, numerical experiments suggest that the fine-scale statistics are the same as those of a Poisson point process. The only result to-date in this direction is the proof of the convergence of the pair correlation function \cite{sarnak_values_1997, Eskin05quadraticforms, marklof_pair_correlation_2003, marklof_pair_correlation_II_2002, margulis_quantitative_2011}. The convergence of higher-order correlation functions has only been established in the case of generic (in measure) positive definite quadratic forms in many variables \cite{vanderkam_values_1999, vanderkam_pair_correlation_1999, VanderKam00}. The situation is similar in the problem of fine-scale 
statistics for the fractional parts of the sequence $\{n^2\alpha \bmod 1\}_{n\in\N}$, where we expect the local 
statistics to converge to those of a Poisson point process (after appropriate rescaling), provided $\alpha$ is badly approximable by rationals. As in the case of binary quadratic forms, we have results for the two-point correlation function 
\cite{rudnick_pair_1998, marklof_equidistribution_2003, heath-brown_pair_2010}. Convergence of the gap distribution for well approximable $\alpha$ along a subsequence is established in \cite{rudnick_sarnak_zaharescu_2001}. 

%n^alpha

\begin{figure}
\begin{centering}\includegraphics[width=0.9\textwidth]{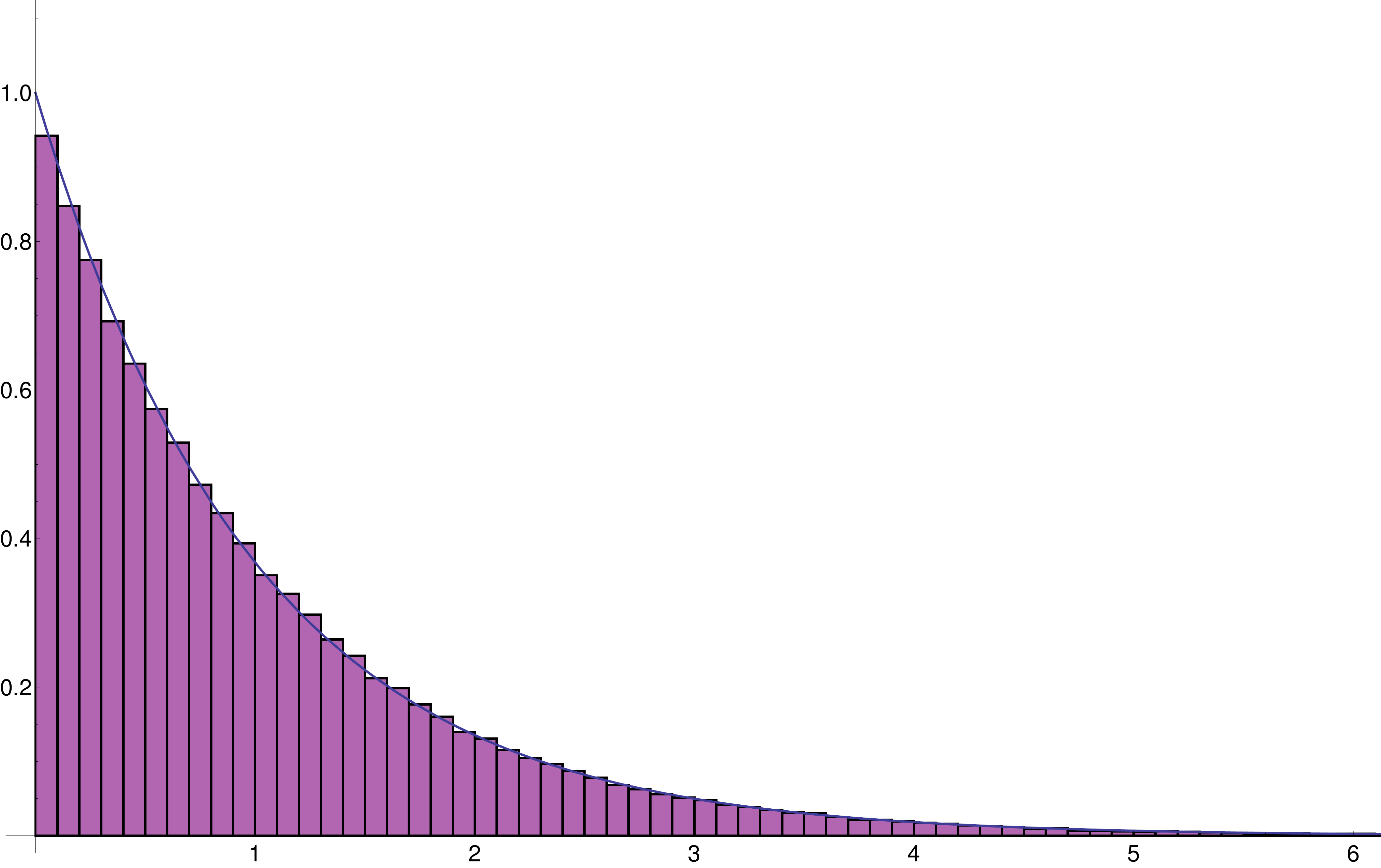}\par\end{centering}
\caption{Gap distribution of the fractional parts of $n^{1/3}$ with $n\leq 2\times 10^5$.  }\label{fig:n13gap}
\end{figure}
\begin{figure}
\begin{centering}\includegraphics[width=0.9\textwidth]{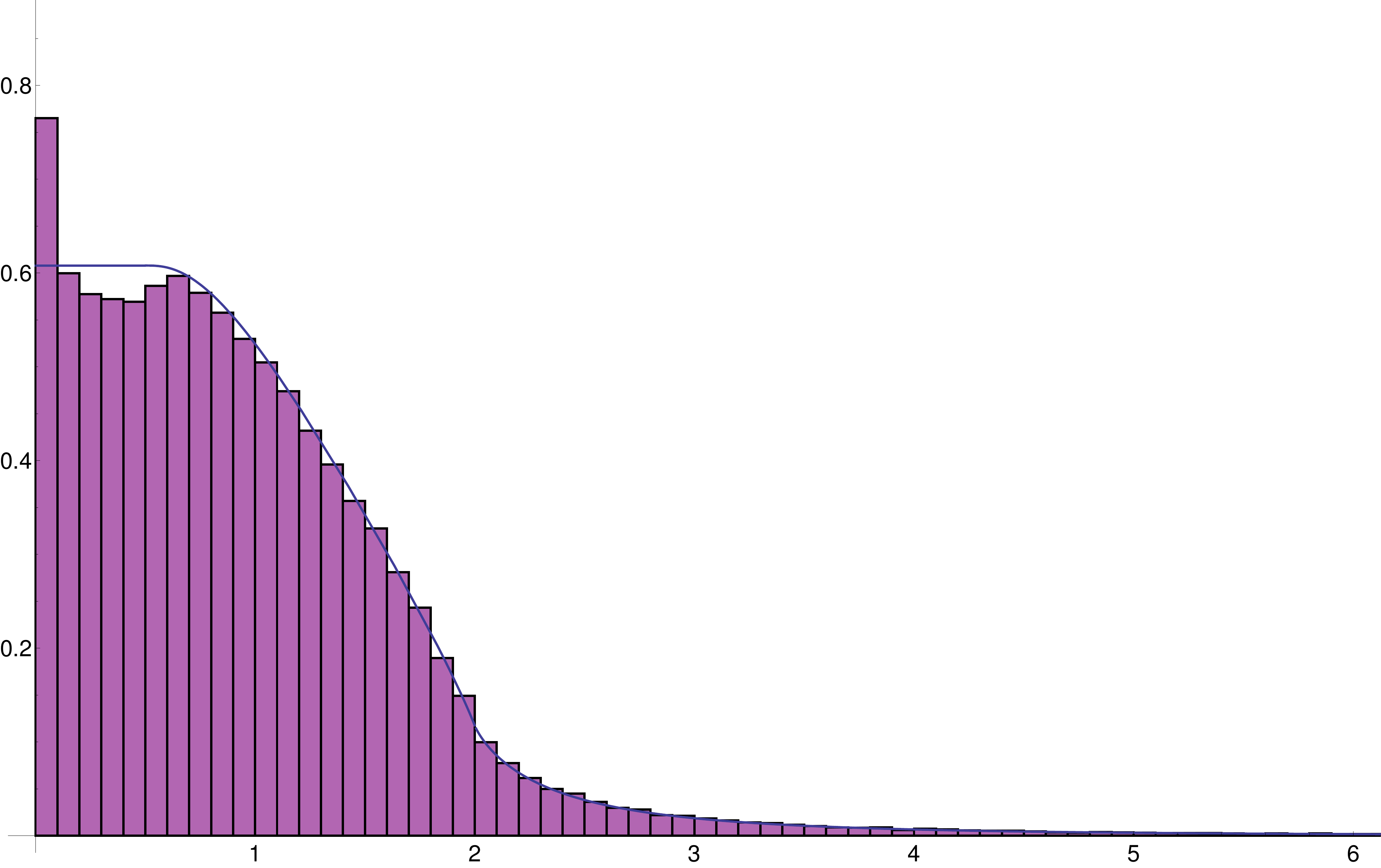}\par\end{centering}
\caption{Gap distribution of the fractional parts of $\sqrt n$ with $n\leq 2\times 10^5$.  }\label{fig:n12gap}
\end{figure}

Another object of study is the sequence $\{n^\alpha\bmod 1\}_{n\in\N}$ for fixed $\alpha\in(0,1)$, which is easily seen to be uniformly distributed.  Numerical experiments (cf.~ Figures \ref{fig:n13gap} and \ref{fig:n12gap}) suggest that the gaps in this sequence converge to the exponential distribution with parameter $1$ as $n\to\infty$, which is the distribution of waiting times in a Poisson process. The only known exception is the case $\alpha=1/2$. Here Elkies and McMullen \cite{ElkiesMcM04} proved that the limit gap distribution exists and is given by a piecewise analytic function with a power-law tail (Sinai \cite{Sinai13} proposed a different solution to show convergence).

In this note we survey recent results concerning two sequences, $\sqrt n$ modulo $1$ and the set of directions for points of an affine lattice. For each sequence we show that the two-point correlation function  and is Poisson, following \cite{el-baz_two-point_2013, distribution_of_directions}. This convergence also applies to more general mixed moments and depends on the Diophantine properties of the translation parameter in the case of affine lattices. The appearance of Diophantine conditions for the convergence of moments is reminiscent of the same phenomenon in the quantitative Oppenheim conjecture, in particular the pair correlation problem for the values of quadratic forms at integers \cite{Eskin05quadraticforms, marklof_pair_correlation_II_2002, marklof_pair_correlation_2003}. The techniques we use here generalize the approach in \cite{marklof_pair_correlation_II_2002, marklof_mean_square_2005}.

Even more recently rates for of convergence for local statistics of these sequences have been established \cite{strombergsson_effective_2013, browning_effective_2013} but we do not discuss this work here. 

The plan of these lecture notes is as follows. In Section \ref{sec:statistics} we define local statistics of sequences modulo one and give their limits in the random case. The sequence of directions in a Euclidean lattice is analyzed in Section \ref{sec:directions}, and Section \ref{sec:sqrtn} is dedicated to the sequences of square roots modulo one. 

\textbf{Acknowledgements.} The author is grateful to Jens Marklof and Daniel El-Baz for comments on the text.

\section{Statistics for sequences modulo 1\label{sec:statistics}}

%define N, gap dist, pair correlantion, 

\subsection{Uniform distribution}

Let $\Xi=\{\xi_n\}_{n\ge 1}$ be a fixed sequence in $[0,1)=\R/\Z$. We present several ways of comparing its long term behavior to that of a typical realization of a sequence of independent uniformly distributed (IUD) random variables on $[0,1)$. The first and crudest such measure is \emph{uniform distribution}: a sequence is said to be \emph{uniformly distributed modulo $1$} if for any interval $[a,b)\subset [0,1)$, 
\beq
\lim_{N\to\infty} \frac{\# \{n\in[1,N]\cap \Z : \xi_n \in[a,b)\}}{N} = b-a. \label{eq:uniform_distribution}
\eeq
This condition implies that each interval gets its fair share of points of the sequence. It follows from the Law of Large Numbers that almost any realization of an IUD sequence is uniformly distributed. The same is true of many interesting fixed sequences, such as 
\begin{enumerate}
 \item $\xi_n=n\alpha \bmod 1$ for $\alpha\in\R\setminus \Q$, 
 \item  $\xi_n=n^2\alpha \bmod 1$ for $\alpha\in\R\setminus \Q$,
 \item $\xi_n=n^\alpha\bmod 1$ for $\alpha \in \R^+\setminus \Z$,
 \item $\xi_n = 2^n \alpha \bmod 1$ for almost every $\alpha\in \R$, 
 \item $\xi_n = \alpha^n \bmod 1$ for almost every $\alpha>1$. 
\end{enumerate}
The Weyl equidistribution criterion is helpful in confirming equidistribution sequences 1, 2, 3, and 5, while proofs using ergodic theory are known for 1, 2, and 4. 
The reader is advised to consult references \cite{kuipers_niederreiter_uniform, boshernitzan_uniform_1994, cornfeld_ergodic_1982} for proofs. 

\subsection{Poisson scaling regime}
The fact that uniformly distributed sequences are so diverse suggests that a finer tool for studying such sequences is needed.  The measure of randomness we introduce amounts to studying visits to a shrinking interval that contains finitely many points on average. For an interval $I\subset \R$, define $X_N^\Xi(\cdot,I)\colon [0,1) \to \Z_{\ge 0}$ by 
\begin{align}
 \label{eq:defX}
 X_N^\Xi(x,I) = \sum_{\substack{1\le n\le N\\m\in\Z}} \chi_{I} (N(\xi_n -x + m)),
\end{align}
where $\chi_{I}$ is the indicator function of the set $I$ (we will suppress the dependence on $\Xi$ for brevity). This quantity should be thought of as a random variable realized on $[0,1)$ with $x$ distributed according to the Lebesgue measure. The sum over $m$ typically consists of one nonzero term; it ensures that the signed distance between $\xi_n$ and $x$ is measured on the circle $[0,1)$ with endpoints identified. The definition \eqref{eq:defX} is analogous to \eqref{eq:uniform_distribution} in the sense that it counts the number of visits to the interval $[x,x+I/N)$ up to time $N$.  This combination of interval length being the reciprocal of the number of points is known as the \emph{Poisson scaling regime} (cf.\ \cite{marklof_survey} for a discussion of the Poisson and other scaling regimes). It is also useful to define a smooth version of $X_N(x,I)$. For $f \colon \R \to \R_{\ge 0}$ of compact support and smooth away from a Lebesgue null set, let $X_N(\cdot, f) \colon [0,1)\to \R_{\ge0}$ be defined by 
\begin{align}\label{eq:defXf}
  X_N(x,f) = \sum_{\substack{1\le n\le N\\m\in\Z}} f (N(\xi_n -x+m)).
\end{align}
The natural question is whether the sequence of random variables $X_N(\cdot,f)$ converges in distribution as $N\to \infty.$ That is, does there exist a distribution function $F(\cdot,f)\colon\R_{\ge 0} \to [0,1]$ such that 
\begin{align}
 \label{eq:limiting_distributionf}
 \leb\{x\in [0,1) : X_N(x,f)\le R\} \to F(R,f)
\end{align}
as $N\to \infty$ when $R$ is a point of continuity of $F(\cdot,f)$? The corresponding question for $X_N(x,I)$ is whether there exists $X_\infty(\cdot,I) : \Z_{\ge 0} \to [0,1]$ with $\sum_{r=0}^\infty X_\infty(r,I)=1$ such that 
\begin{align}
 \label{eq:limiting_distributionX}
 \leb\{x\in [0,1) : X_N(x,I)= r \} \to X_\infty(r,I)
\end{align}
for every $r\in\Z_{\ge 0}$ as $N\to\infty$. 

The existence of the limit of $X_N$ very strongly depends on the underlying sequence $\Xi$, and rigorous results are rather scant. We will be concerned with the sequence $\sqrt{n}\bmod 1$ (Section \ref{sec:sqrtn}) and the sequence of directions in an affine Euclidean lattice defined precisely in Section \ref{sec:directions}.

In the case when $\Xi$ is almost any realization of a sequence of IUD's, the answer to both questions above is positive. In fact, it is not difficult to show that $X_\infty(r,I) = e^{-|I|} |I|^r/r!$; that is $X_\infty(\cdot,I)$ is Poisson-distributed with parameter $|I|$, length of $I$. Moreover, one can consider the set $\{N(\xi_n -x \bmod1): 1\le n\le N\}$ as a realization of a point process on $\R$ where $x$ is a Lebesgue-random parameter, the representative for $\xi_n -x \bmod1$ being chosen in the interval $[-1/2,1/2)$. Then, in the case of almost any realization of an IUD sequence, finite-dimensional distributions of the process 
\begin{align}\label{eq:process}
 x\mapsto \{N(\xi_n -x\bmod 1): 1\le n\le N\}
\end{align}
converge to those of a Poisson point process with intensity $1$. 

%gap dist, corr functions

\subsection{Construction of general statistics}

Popular statistics of the Poisson scaling regime can be defined via $X_N(\cdot,f).$ For $f \colon \R \to \R_{\ge 0}$ of compact support and smooth away from a Lebesgue null set, the \emph{pair correlation function} (or \emph{two-point correlation function}) $R^2_N(f)$ is defined by 
\begin{align}
 \label{eq:pair_def}
 R_N^2(f) = \frac1N \sum_{\substack{1\le n_1\ne n_2 \le N\\m\in\Z}} f(N(\xi_{n_1}-\xi_{n_2}+m)).
\end{align}
Note that the sum typically contains about $N$ nonzero terms, so that $R_N^2(f)$ is $O(1)$ with this normalization. More general $k$-point correlation functions can be defined by considering differences $\xi_{n_j}-\xi_{n_{j+1}}$. Just like the pair correlation function, the $k$-point correlation functions can be expressed in terms of mixed moments of $X_N(\cdot,f)$, but we only consider the case $k=2$. The following Lemma shows how to build the pair correlation function out of $X_N(\cdot,f)$. 

\begin{lemma}\label{lem:X_to_pair}
For compactly supported $f_1, f_2 \colon \R \to\R$ that are almost everywhere continuous set 
\[
 f_1 *' f_2 (w) = \int_\R f_1(w+t) f_2(t)\, dt.
\]
Then, we have 
\begin{align}
 R^2_N(f_1*'f_2) = \int_\T X_N(x,f_1) X_N(x,f_2) \, dx -\int_\T X_N(x,f_1\cdot f_2) \,dx,
\end{align}
for $N$ sufficiently large depending on supports of $f_1$ and $f_2$. 
\end{lemma}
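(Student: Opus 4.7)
The plan is to expand the product $X_N(x,f_1)X_N(x,f_2)$ as a double sum over $(n_1,m_1)$ and $(n_2,m_2)$, split it into the diagonal part $n_1=n_2$ and the off-diagonal part $n_1\neq n_2$, and show after integration in $x\in\T$ that the diagonal part produces $\int_\T X_N(x,f_1\cdot f_2)\,dx$ while the off-diagonal part produces $R_N^2(f_1*'f_2)$. Then rearranging gives the stated identity.

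First I would write
\[
 X_N(x,f_1)X_N(x,f_2)=\sum_{\substack{n_1,n_2=1\\m_1,m_2\in\Z}}^{N}f_1\!\bigl(N(\xi_{n_1}-x+m_1)\bigr)f_2\!\bigl(N(\xi_{n_2}-x+m_2)\bigr),
\]
and observe that, since $f_1,f_2$ have compact support (say in $[-M,M]$), for $N>4M$ and each fixed $n$ and $x\in\T$ at most one $m\in\Z$ contributes a nonzero term in either factor. Restricting to $n_1=n_2=n$, the inner $m$-sum therefore collapses to the diagonal $m_1=m_2=m$, and the contribution of the diagonal becomes $\sum_{n,m}(f_1\cdot f_2)(N(\xi_n-x+m))=X_N(x,f_1\cdot f_2)$. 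Integrating over $\T$ yields the second summand on the right-hand side of the claim.

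Next I would handle the off-diagonal part $n_1\neq n_2$. The key trick is the usual unfolding: for any sufficiently nice $g$,
\[
 \int_0^1\sum_{m_1\in\Z}g(x+m_1)\,dx=\int_\R g(x)\,dx,
\]
applied with $g(x)=f_1(N(\xi_{n_1}-x))\cdot\sum_{m_2}f_2(N(\xi_{n_2}-x+m_2))$ (the second factor is already $1$-periodic in $x$). After the unfolding and the substitution $u=N(\xi_{n_1}-x)$ the integral over $x$ becomes
\[
 \frac{1}{N}\int_\R f_1(u)\sum_{m_2\in\Z}f_2\!\bigl(u+N(\xi_{n_2}-\xi_{n_1}+m_2)\bigr)\,du,
\]
and setting $m=-m_2$ this is $\frac{1}{N}\sum_{m\in\Z}(f_1*'f_2)\bigl(N(\xi_{n_1}-\xi_{n_2}+m)\bigr)$ by definition of $*'$. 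Summing over $n_1\neq n_2$ reproduces $R_N^2(f_1*'f_2)$ exactly, so combining with the diagonal gives
\[
 \int_\T X_N(x,f_1)X_N(x,f_2)\,dx=\int_\T X_N(x,f_1\cdot f_2)\,dx+R_N^2(f_1*'f_2),
\]
which rearranges to the assertion.

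The only genuinely delicate point is the first step—justifying why, for $N$ large depending on $\supp f_1$ and $\supp f_2$, the $m$-sum in each factor of $X_N$ contains at most one nonzero term, so that the diagonal in $(n_1,n_2)$ actually forces the diagonal in $(m_1,m_2)$. Once that book-keeping is settled, both the diagonal identification and the off-diagonal unfolding are routine; the almost-everywhere continuity and compact support of $f_1,f_2$ ensure that Fubini and the change of variables cause no problems.
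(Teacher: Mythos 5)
Your proposal is correct and follows essentially the same route as the paper: expand $\int_\T X_N(x,f_1)X_N(x,f_2)\,dx$ as a quadruple sum, separate the diagonal $n_1=n_2$ (where compact support of $f_1,f_2$ forces $m_1=m_2$ for large $N$, yielding $\int_\T X_N(x,f_1\cdot f_2)\,dx$) from the off-diagonal $n_1\neq n_2$, and unfold the off-diagonal integral with a linear change of variables to obtain $R_N^2(f_1*'f_2)$. The bookkeeping of the $m$-sums and the substitution are carried out correctly, and you rightly flag the large-$N$ requirement as the one delicate point.
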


\begin{proof}We have
\begin{multline*}
\int_\T  X_N(x,f_1) X_N(x,f_2) \, dx  = \int_\T \sum_{\substack{1\le n_1,n_2\le N\\m_1,m_2\in\Z}} f_1(N(\xi_{n_1}-x+m_1)) f_2(N(\xi_{n_2}-x+m_2))\, dx\\
 = \sum_{\substack{1\le n_1\ne n_2\le N\\m_1,m_2\in\Z}} \int_\T f_1(N(\xi_{n_1}-x+m_1)) f_2(N(\xi_{n_2}-x+m_2))\, dx 
\\
+ \sum_{\substack{1\le n\le N\\m\in\Z}} \int_\T (f_1\cdot f_2)(N(\xi_{n}-x+m)) \, dx,
\end{multline*}
where we set $m_1=m_2=m$ in the last term assuming $N$ is large. Evaluating both integrals we get
\begin{align*}
& = \sum_{\substack{1\le n_1\ne n_2\le N\\ m\in\Z}} \int_\R f_1(N(\xi_{n_1}-\xi_{n_2}-t+m)) f_2(-Nt) \, dt + \sum_{1\le n\le N} \int_\R (f_1\cdot f_2)(-Nt)\, dt\\
& = \frac1N \sum_{\substack{1\le n_1\ne n_2\le N\\m\in \Z}} \int_\T f_1(N(\xi_{n_1}-\xi_{n_2}+m)+t) f_2(t) \, dt+\int_\R (f_1\cdot f_2)(t)\, dt\\
& = R^2_N(f_1*'f_2) + \int_\T X_N(x,f_1\cdot f_2) \,dx,
\end{align*}
as needed. 
\end{proof}
Pair correlations for general functions (not ``convolutions'' $f_1*'f_2$) can be obtained as limits by an approximation argument (cf.\ Appendix 1 in \cite{distribution_of_directions}). Note also that in the case of IUD's, \[R_N^2(f)\to \int_\R f(s) \, ds\] as $N\to\infty.$ 

Another commonly used measure of randomness that is derivable from $X_N(\cdot,I)$ is the \emph{gap distribution}. Fix $N$, and let $\{\xi_1, \dots, \xi_N\} = \{\xi_1'\le \dots \le \xi_N'\}$ with the same repeats if need be. For $x\ge 0$, let 
\begin{align}\label{eq:gapdist_def}
 \lambda_N(x) = \frac1N \#\{n\in [1,N]\cap \Z : \xi_{n+1}'-\xi_n'< x/N\},
\end{align}
with the obvious interpretation when $n=N$. This is the fraction of gaps that are shorter than $x/N$. Since there are $O(N)$ gaps, the length of an average gap is of order $1/N$, so that this scaling leads to a finite quantity. It is proven in \cite{marklof_survey} that if $X_\infty(0,[0,A])$ exists and satisfies 
\begin{align}\label{eq:gaplimit}
\lim_{A\to0} X_\infty(0,[0,A]) = 1,\quad \lim_{A\to\infty } \frac{dX_\infty(0,[0,A])}{dA} = 0,
\end{align}
then $\lambda_N(A) \to 1+\frac{dX_\infty(0,[0,A])}{dA} = \lambda_\infty(A) $ as $N\to\infty$ at points of continuity of the limit. Thus the limiting behavior of the gap distribution can be understood entirely though the convergence of $X_N$ and the form of $X_\infty$. For comparison, in the case of IUD's, $\lambda_\infty(A) = 1-e^{-A}$, which is the exponential distribution with parameter $1$. This is consistent with the picture of a Poisson point process that arises as the limit of \eqref{eq:process}. 

More general \emph{$k$-neighbor distributions} are constructed using differences $\xi_{n+k}'-\xi_n'$ for $k>1$ in \eqref{eq:gapdist_def}; they count distances to the $k^{\text{th}}$ neighbor, ignoring the first $k-1$ neighbors. The limiting $k$-neighbor distribution, if it exists, is related to the derivative of $X_\infty(k,[0,A])$, analogously to the case $k=0$ in \eqref{eq:gaplimit}. In this sense neighbor distributions can be recovered from $X_N(\cdot,f)$. 

Numerics first performed by Boshernitzan in the 1990s suggest that the limiting gap distribution for  sequences like $\xi_n = n^\alpha \log^\beta n\bmod 1$ (with $\alpha$ and $\beta$ chosen to ensure uniform distribution) exists and is exponential, as in the random setting, save the case $\xi_n = n^{1/2} \bmod 1$ (cf.~Figures \ref{fig:n13gap} and \ref{fig:n12gap}). For the sequence of square roots, the entire limiting point process was understood by Elkies and McMullen \cite{ElkiesMcM04}, and the gap distribution is a non-universal distribution. The problem of local statistics for $n^\alpha \log^\beta n\bmod 1$ (except $\alpha=1/2$, $\beta=0$) is completely open. (See however \cite{marklof_gaps_2013} for the study of the gap distribution of $\log_b n\bmod 1$, which is \emph{not} uniformly distributed.) Another sequence that surprisingly leads to the same point process is the set of directions in an affine lattice with irrational shift, which we discuss in the next section.

\section{Directions in affine lattices\label{sec:directions}}

%moments

\subsection{Setup}

In this section we construct a deterministic sequence whose two-point correlation function converges to the Poisson limit, although the limiting process is not Poisson. This sequence is given by the directions of vectors in an affine Euclidean lattice of length less than $T$, as $T\to\infty$. 

Let $\scrL\subset\R^2$ be a Euclidean lattice of covolume one. We may write $\scrL=\Z^2 M_0$ for a suitable $M_0\in\SL(2,\R)$. For $\vecxi=(\xi^1,\xi^2)\in\R^2$, we define the associated affine lattice as $\scrL_\vecxi=(\Z^2+\vecxi)M_0$. Denote by $\scrP_T$ the set of points $\vecy\in\scrL_\vecxi\setminus\{\vecnull\}$ inside the open disc of radius $T$ centered at zero. The number $N(T)$ of points in $\scrP_T$ is asymptotically
\begin{equation}
N(T)  \sim \pi T^2 , \qquad T\to\infty.
\end{equation}
We are interested in the distribution of directions $\|\vecy\|^{-1} \vecy$ as $\vecy$ ranges over $\scrP_T$, counted {\em with} multiplicity. That is, if there are $k$ lattice points corresponding to the same direction, we will record that direction $k$ times. For each $T$, this produces a finite sequence of $N(T)$ unit vectors $(\cos(2\pi\xi_n),\sin(2\pi\xi_n))$ with $\xi_n=\xi_n(T)\in\T=\R/\Z$ and $n=1,\ldots,N(T)$. Here we interpret the definition of sequence rather loosely: we are content with a dense set of angles that is uniformly distributed when exhausted by the radius $T$. For any interval $U\subset\T$, we have
\begin{equation}\label{udi}
\lim_{T\to\infty} \frac{\# \{ n \le N(T) :  \xi_n \in U \}}
{N(T)} 
= |U| ,
\end{equation}
where $|\cdot|$ denotes length. 
Defining $X_N(x,I)$ as in \eqref{eq:defXf}, eq.~\eqref{udi} implies that for any Borel probability measure $\lambda$ on $\T$ with continuous density,
\begin{equation}\label{expectation}
\lim_{t\to\infty} \int_\T X_{N(T)} (x,I) \, \lambda(dx) = | I|. 
\end{equation}

It is proved in \cite{marklof_strombergsson_free_path_length_2010} that for every $\vecxi\in\R^2$ and $x \in\T$ random with respect to $\lambda$ (which is only assumed to be absolutely continuous with respect to the Lebesgue measure), the random variable $X_{N(T)} (\cdot,I)$ has a limit distribution $X_\infty(\cdot,I)$. That is, for every $r\in\Z_{\ge 0}$,
\begin{equation}\label{eq:one_interval_directions}
\lim_{T\to\infty}\lambda(\{x \in\T : X_{N(T)} (x,I) = r\}) = X_\infty(r,I).
\end{equation}
The limit distribution $X_\infty(\cdot,I)$ is independent of the choice of $\lambda$, $\scrL$, and, if $\vecxi\notin\Q^2$, independent of $\vecxi$. In fact, these results hold for several test intervals $I_1,\ldots, I_m$, and follow directly from Theorem 6.3, Remark 6.4  and Lemma 9.5 of \cite{marklof_strombergsson_free_path_length_2010} 
for $\vecxi\notin\Q^2$ and from Theorem 6.5, Remark 6.6 and Lemma 9.5 of \cite{marklof_strombergsson_free_path_length_2010}  in the case $\vecxi\in\Q^2$:

\begin{theorem}[Marklof, Str\"ombergsson \cite{marklof_strombergsson_free_path_length_2010}]
\label{th:prelim_directions}
Fix $\vecxi\in\R^2$ and let $I=I_1\times\cdots \times I_m\subset \R^m$ be a bounded box. Then there is a probability distribution $X_\infty(\cdot,I)$ on $\Z_{\ge 0}^m$ such that, for any $\vecr=(r_1,\ldots,r_m)\in\Z_{\ge 0}^m$ and any Borel probability measure $\lambda$ on $\T$, absolutely continuous with respect to Lebesgue,
\begin{equation}
\lim_{T\to\infty}\lambda(\{ x\in\T : X_N(x,I_1)= r_1,\ldots,X_N(x,I_m)=r_m\}) = X_\infty(\vecr,I).
\end{equation}
\end{theorem}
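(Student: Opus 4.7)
The strategy is to renormalize: a nonzero point $\vecy\in\scrL_\vecxi$ contributes to $X_{N(T)}(x,I_j)$ iff $0<\|\vecy\|<T$ and the direction of $\vecy$ lies within $O(T^{-2})$ of $x$. Applying the rotation $R(-2\pi x)$ followed by the diagonal scaling $a_T=\diag(T^{-1},T)$ sends such a vector into a fixed bounded triangular region
\[ \triangle_{I_j}=\bigl\{(u,v)\in\R^2:0<u<1,\ v\in 2u I_j\bigr\}, \]
so that, up to boundary effects vanishing as $T\to\infty$,
\[ X_{N(T)}(x,I_j)=\#\bigl((\Z^2+\vecxi)M_0 R(-2\pi x) a_T\cap\triangle_{I_j}\bigr). \]
The joint statistic in the theorem is thereby encoded as the $m$-tuple of lattice-point counts of the single random affine lattice $g_T(x):=(\Z^2+\vecxi)M_0 R(-2\pi x) a_T$ in the fixed bounded sets $\triangle_{I_1},\dots,\triangle_{I_m}$.

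The next step is to prove that the $\lambda$-pushforward of $x\mapsto g_T(x)$ converges weakly to a probability measure $\mu_\vecxi$ on the space of affine unimodular lattices as $T\to\infty$. Via the $KAN$-decomposition the circle $\SO(2)\,a_T$ unfolds into a long expanding horocycle in $\SL(2,\Z)\backslash\SL(2,\R)$, and the affine part $(\Z^2+\vecxi)$ promotes this to a one-parameter family in $\SL(2,\Z)\ltimes\Z^2\backslash\SL(2,\R)\ltimes\R^2$. Ratner's classification of unipotent-invariant measures, combined with Dani--Margulis linearization (or Shah's theorem for slowly varying trajectories), then identifies $\mu_\vecxi$ as Haar measure on the full affine quotient when $\vecxi\notin\Q^2$, and as a canonical measure on a proper invariant submanifold (a congruence cover) when $\vecxi\in\Q^2$; in both cases $\mu_\vecxi$ is independent of $\scrL$ and of $\lambda$. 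One then \emph{defines}
\[ X_\infty(\vecr,I):=\mu_\vecxi\bigl\{g:\#(g\cap\triangle_{I_i})=r_i\ \text{for all }i\bigr\}. \]

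The main obstacle is that the joint level-set indicator $g\mapsto \mathbf 1_{\#(g\cap\triangle_{I_i})=r_i\,\forall i}$ is neither continuous nor compactly supported on the noncompact space of affine lattices, so the weak convergence of Step~2 does not apply directly. Three standard ingredients must be combined: (a) a sandwich approximation of $\chi_{\triangle_{I_j}}$ by smooth compactly supported functions from above and below, turning the joint level-set indicator into a continuous functional with arbitrarily small $L^1$-error; (b) a Margulis-style non-divergence estimate, uniform in $T$, ensuring that $\lambda$-almost no $x$ yields a lattice with a very short nonzero vector, which simultaneously controls escape of mass into the cusp and bounds the total count $X_{N(T)}(x,I_j)$; and (c) the fact that the boundary set $\{g:g\text{ has a nonzero vector on }\partial\triangle_{I_j}\}$ is $\mu_\vecxi$-null, being a countable union of positive-codimension subvarieties. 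Combining these with the equidistribution of Step~2 yields the claimed convergence; this passage from continuous-function convergence to convergence of the level-set probability is the technical heart of Lemma~9.5 of \cite{marklof_strombergsson_free_path_length_2010}.
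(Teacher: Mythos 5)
The survey paper does not actually prove Theorem \ref{th:prelim_directions}; it states that the result follows directly from Theorems 6.3/6.5, Remarks 6.4/6.6, and Lemma 9.5 of \cite{marklof_strombergsson_free_path_length_2010}, and defers to that reference. Your outline is a faithful reconstruction of the strategy used there --- renormalizing the arc counts into lattice-point counts in fixed triangles, proving equidistribution of the pushed-forward circle orbit (Haar on the full affine space for $\vecxi\notin\Q^2$, a closed $G$-orbit realized as a finite cover of the modular surface for $\vecxi\in\Q^2$), and passing from weak-$*$ convergence of continuous test functions to convergence of level-set probabilities via sandwiching, nondivergence, and $\mu_\vecxi$-negligibility of the boundary events --- and this is exactly the approach the paper also sketches around \eqref{eq:crude_directions} and Lemma~\ref{lem:upper}; the only refinement worth noting is that the exact (rather than approximate) identity between the cone count and the triangle count uses a $T$-dependent region $\triangle^{(t)}(I)\to\triangle(I)$, which is the boundary effect you correctly flag as negligible.
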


In the case of rational  $\vecxi$,  an error term is easily obtained since the proof uses mixing on a finite cover of $\sltz\quot \sltr$. 
Owing to recent work of Str\"ombergsson \cite{strombergsson_effective_2013}, convergence can also be made effective for $\vecxi\notin\Q^2$, with rate depending on Diophantine properties of $\vecxi$. 

In the language of point processes, Theorem \ref{th:prelim_directions} says that the point process \[\{ N(T) (\xi_n-x)\}_{n\le N(T)}\] on the torus $\R/(N(T)\Z)$ converges, as $T\to\infty$, to a random point process on $\R$ which is determined by the probabilities $X_\infty(\vecr,I)$, thus answering the question of convergence of local statistics for this sequence. 
We highlight some key properties proven in \cite{distribution_of_directions}:
\begin{enumerate}[(a)]
\item $X_\infty(\vecr,I)$ is independent of $\lambda$ and $\scrL$.
\item $X_\infty(\vecr,I+t\underline{e})=X_\infty(\vecr,I)$ for any $t\in\R$, where $\underline{e}=(1,1,\ldots,1)$; that is, the limiting process is translation invariant.
\item $\sum_{\vecr\in\Z_{\ge 0}^m} r_j X_\infty(\vecr,I) = \sum_{k=0}^\infty r X_\infty(r,I_j) = |I_j|$ for any $j\le m$.
\item For $\vecxi\in\Q^2$, $\sum_{\vecr\in\Z_{\ge 0}^m} \|\vecr\|^ s  X_\infty(\vecr,I) < \infty$ for $0\le  s <2$, and $=\infty$ for $ s \ge 2$. 
\item For $\vecxi\notin\Q^2$, $X_\infty(\vecr,I)$ is independent of $\vecxi$.
\item For $\vecxi\notin\Q^2$, $\sum_{\vecr\in\Z_{\ge 0}^m} \|\vecr\|^ s  X_\infty(\vecr,I) < \infty$ for $0\le  s <3$, and $=\infty$ for $ s \ge 3$. 
\end{enumerate}

Properties (d) and (f) imply that the limiting process is \emph{not} a Poisson process. We will however see that  when $\vecxi\notin\Q^2$, the second moments and two-point correlation functions are those of a Poisson process with intensity $1$. Specifically, we have
\begin{equation}\label{eq:poisson2directions}
\sum_{\vecr\in\Z_{\ge 0}^2} r_1 r_2 X_\infty(\vecr,I_1\times I_2) = |I_1\cap I_2|+|I_1|\,|I_2|
\end{equation}
and, in particular, 
\begin{equation}\label{eq:poisson1directions}
\sum_{r=0}^\infty r^2 X_\infty (r,I_1) = |I_1|+|I_1|^2 ,
\end{equation}
which coincide with the corresponding formulas for the Poisson distribution. 

The problem we discuss in this section is to establish the convergence of moments to the finite moments of the limiting process. It is interesting that the convergence of certain moments requires a Diophantine condition on $\vecxi$. We say that  \emph{$\vecxi \in \R^2$ is Diophantine of type $\kappa$} if there exists $C>0$  such that 
\beq\forall \bm k = (k_1, k_2) \in \Z^2 \setminus \{\vecnull\}, \forall \ell \in \Z, \, |\bm k\cdot \vecxi + \ell| \ge \frac C{(|k_1|+|k_2|)^{\kappa}}.\eeq
It is well known that Lebesgue almost all $\vecxi\in\R^2$ are Diophantine of type $\kappa>2$, and that there is no $\vecxi\in\R^2$ which is Diophantine of type $\kappa<2$ \cite{schmidt_diophantone_1980}.
A specific example of a Diophantine vector of type $\kappa=2$ can be obtained from a degree 3 extension $K$ over $\Q$: If $\xi^1, \xi^2\in K$ are such that $\{1,\xi^1,\xi^2\}$ is a $\Q$-basis for $K$, then $\vecxi=(\xi^1,\xi^2)$ is Diophantine of type $2$ (see Theorem III of Chapter 5 and its proof in \cite{Cassels57DiophantineApproximation}). 

We also recall that $\omega\in\R$ is \emph{Diophantine of type $\kappa$} if 
\beq\forall k \in \Z \setminus \{0\}, \forall \ell \in \Z, \, |k \omega + \ell| \ge \frac C{|k|^{\kappa}}.\eeq
Here the critical value of $\kappa $ is $1$: almost all real numbers are Diophantine of type $\kappa>1$ and none are Diophantine of type $\kappa<1$. Numbers with bounded entries in the continued fraction expansion and, in particular, quadratic irrationals like $\sqrt 2$ achieve $\kappa=1$. 

For $I=I_1\times\cdots\times I_m\subset  \R^m$, $\lambda$ a Borel probability measure on $\T$ and $\vecs=(s_1,\ldots,s_m)\in\R_{\ge0}^m$ let
\begin{equation}
\M_\lambda(T,\vecs):=\int_\T \left(X_{N(T)}(x,I_1)\right)^{s_1} \cdots \left(X_{N(T)}(x,I_m)\right)^{s_m} \lambda(dx).
\end{equation}

\begin{theorem}[El-Baz, Marklof, V. {\cite[Th.~2]{distribution_of_directions}}]
\label{th:main_directions}
Let $I=I_1\times\cdots\times I_m\subset  \R^m$ be a bounded box, and $\lambda$ a Borel probability measure on $\T$ with continuous density. Choose $\vecxi\in\R^2$ and $\vecs=(s_1,\ldots,s_m)\in\R_{\ge0}^m$, such that one of the following hypotheses holds:
\begin{enumerate}[{\rm({A}1)}]
\item  $s_1+\ldots+s_m< 2$.
\item  $\vecxi$ is Diophantine of type $\kappa$, and $s_1+\ldots+s_m<2+\frac2{\kappa}$. 
\item  $\vecxi=\vecn \omega +\vecl$ where $\vecn\in\Z^2 \setminus \{ \vecnull \}$ and $\vecl\in\Q^2$ so that $\det(\vecn,\vecl)\notin\Z$, and $\omega\in\R$ is Diophantine of type $\frac{\kappa}{2}$, and $s_1+\ldots+s_m<2+\frac2{\kappa}$.
\end{enumerate}
Then,
\begin{equation}\label{eq:main_directions}
\lim_{T\to\infty} \M_\lambda(T,\vecs) = \sum_{\vecr\in\Z_{\ge 0}^m} r_1^{s_1}\cdots r_m^{s_m} X_\infty(\vecr,I).
\end{equation}
\end{theorem}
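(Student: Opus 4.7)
My plan is to upgrade the distributional convergence in Theorem~\ref{th:prelim_directions} to convergence of mixed moments by establishing uniform integrability. Since $\lambda$ has a bounded continuous density and $X_N(x,I_j)\le X_N(x,J)$ whenever $I_j\subseteq J$, Hölder's inequality reduces the desired uniform integrability of $\prod_j X_{N(T)}(\cdot,I_j)^{s_j}$ to a single-box $L^p$ bound
\begin{equation}\label{eq:plan_onebox}
\sup_T \int_\T X_{N(T)}(x,J)^{p}\,dx<\infty
\end{equation}
for some $p>s:=s_1+\cdots+s_m$ below the hypothesis-dependent threshold ($2$ under (A1), $2+2/\kappa$ under (A2) or (A3)). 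Equivalently, I need a uniform tail bound
\begin{equation}\label{eq:plan_tail}
\leb\bigl(\{x\in\T: X_{N(T)}(x,J)\ge L\}\bigr) \le C L^{-\alpha}
\end{equation}
for some $\alpha$ slightly above $s$. Given \eqref{eq:plan_tail}, convergence in distribution together with uniform integrability delivers \eqref{eq:main_directions}.

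Under (A1) the tail \eqref{eq:plan_tail} with any $\alpha<2$ follows from a uniform second-moment estimate on $X_{N(T)}(\cdot,J)$ via Markov's inequality. To obtain such an estimate I would combine Lemma~\ref{lem:X_to_pair} with the pair-correlation formula provided by the homogeneous-dynamics framework underlying Theorem~\ref{th:prelim_directions}: the two-point integral can be written as an orbit integral on a finite cover of $\sltz\quot\sltr$, whose uniform boundedness in $T$ reduces to integrating a fixed continuous test function against equidistributing probability measures.

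Under (A2) and (A3) the required tail with $\alpha$ approaching $2+2/\kappa$ comes from a direct Diophantine counting argument. Geometrically, $X_{N(T)}(x,J)\ge L$ means that the arc of length $|J|/N(T)=O(T^{-2})$ around $x$ contains the directions of at least $L$ lattice points of $\scrL_\vecxi$ of norm $\le T$; equivalently, $L$ lattice vectors co-inhabit a sector of area $O(1)$ and aspect ratio $T^2$. The Lebesgue measure of the set of such $x$ is controlled by the number of $L$-element angular clusters of lattice points, times $O(T^{-2})$. Fixing a base vector $\vecy_1$ of the cluster and writing additional lattice vectors as $\vecy=(\vecm+\vecxi)M_0$, the angular proximity constraint becomes an inhomogeneous linear inequality $|\veck\cdot\vecxi+\ell|=O(T^{-2})$ with $\veck\in\Z^2\setminus\{\vecnull\}$ dual to $\vecy_1$ and $\ell\in\Z$. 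The Diophantine lower bound
\[
|\veck\cdot\vecxi+\ell|\ge C(|k_1|+|k_2|)^{-\kappa}
\]
from (A2), or the one-dimensional analogue for $\omega$ coming from the relation $\vecxi=\vecn\omega+\vecl$ of case (A3), forces $|\veck|$ to be large, thereby restricting the number of admissible additional lattice vectors to $O(T^{(2-2/\kappa)+\varepsilon})$ instead of the naïve $O(T^2)$. Accumulating these bounds over the $O(T^2)$ choices of base vector and summing over cluster sizes $L$ yields \eqref{eq:plan_tail} with $\alpha$ arbitrarily close to $2+2/\kappa$ from below.

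\textbf{Main obstacle.} The technical crux is the Diophantine lattice-point count in the last paragraph, and in particular making the exponent $2-2/\kappa$ sharp. This requires a careful dyadic decomposition over the dual parameter $\veck$ and a summation scheme that balances the sector area against the Diophantine restriction, producing exactly the threshold $2+2/\kappa$ on $\alpha$. Case (A3) is structurally distinct because $\vecxi$ lies on a rational affine line: the effective Diophantine problem collapses to one dimension in $\omega$, and the exponent rescales from $\kappa$ to $\kappa/2$; the non-degeneracy condition $\det(\vecn,\vecl)\notin\Z$ rules out the degeneration of $\scrL_\vecxi$ into a translate of a rational lattice that would otherwise leave us in the $s<2$ regime of property (d).
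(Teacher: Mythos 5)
Your top-level reduction---convergence in distribution (Theorem~\ref{th:prelim_directions}) plus uniform integrability of $\prod_j X_{N(T)}(\cdot,I_j)^{s_j}$---is the same scheme the paper uses, phrased there via the restricted moments $\M_\lambda^{(K)}$ and the escape-of-mass statement \eqref{eq:main_directions3}. The execution, however, diverges and contains a genuine gap.

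The (A1) branch is broken as written. You propose deducing the tail bound \eqref{eq:plan_tail} with $\alpha<2$ from $\sup_T\int_\T X_{N(T)}(x,J)^2\,dx<\infty$, but no such uniform second-moment bound holds: for $\vecxi\in\Q^2$ (already $\vecxi=\vecnull$) each rational line through the origin contributes roughly $T/|\vecm|$ collinear points, each seen on an arc of measure $\sim T^{-2}$, and summing $\bigl(T/|\vecm|\bigr)^2T^{-2}$ over primitive $\vecm$ with $|\vecm|\lesssim T$ gives $\gg\log T$. So $\M_\lambda(T,2)$ is \emph{not} uniformly bounded in $T$, consistent with property~(d) and with the $\liminf$ inequality \eqref{limit:main111}. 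Moreover your claim that the second moment is controlled because it is ``an orbit integral on a finite cover of $\sltz\quot\sltr$, whose uniform boundedness reduces to integrating a fixed continuous test function against equidistributing probability measures'' passes over precisely the hard point: the relevant function on the homogeneous space (the paper's $F_{R,\beta}$, or $X(g,\cdot)^2$) is unbounded in the cusp, so equidistribution of the horocycle translates gives nothing by itself. Controlling the cusp contribution \emph{is} the escape-of-mass problem; it is not automatic. The paper handles $\beta=s/2<1$ by bounding $F_{R,\beta}$ by a $\vecxi$-independent majorant and estimating the horospherical average by an Eisenstein series; you need something of that nature, not Markov on a nonexistent $L^2$ bound.

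For (A2)/(A3) your idea of converting angular proximity into the inhomogeneous inequality $|\veck\cdot\vecxi+\ell|=O(T^{-2})$, with $\veck$ coming from a difference of lattice vectors, is the right mechanism and is morally what the paper's Lemma~\ref{lem:upper} combined with Proposition~\ref{prop:main_homogeneous_directions} encodes (the cusp height $v$ plays the role of your cluster size, and the $\xi^1 v^{1/2}$ variable carries the Diophantine information). But your accounting of the cluster count is too loose to deliver a tail exponent: fixing a base vector and bounding the number of companions gives a bound on a first moment of cluster multiplicity, and ``summing over cluster sizes $L$'' does not by itself produce a decay in $L$; you would need a dyadic decomposition in the cusp height (equivalently, in the cluster multiplicity) and a Diophantine input that rules out many large clusters simultaneously, which is exactly what the paper packages into the estimate on $\int F_{R,\beta}((1,\vecxi)Mn(u)a(v))h(u)\,du$ as $R\to\infty$. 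As it stands, the (A2)/(A3) step is a plausible heuristic, not a proof; and the (A1) step rests on a false premise.
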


The fact that \emph{some} Diophantine condition is necessary in (A2) or (A3) can be seen from the following argument. Assume that $\veck\cdot(\vecxi+\vecm) = 0$ for some $\veck\in\Z^2\setminus\{\vecnull\}$, $\vecm\in\Z^2$. Then there is a line through the origin (in direction $\alpha_\veck$, say) that contains infinitely many lattice points of $\scrL_\vecxi$ so that, for any $\epsilon>0$ and sufficiently large $T$,
\begin{equation}
X_{N(T)}(\alpha_\veck,(-\epsilon,\epsilon)) \gg_{\veck,\scrL} T,
\end{equation}
where the implied constant depends only on $\veck$ and $\scrL$. This in turn implies that when $\lambda $ is the Lebesgue measure and $s\ge 2$ we have
\begin{equation}
\M_{\leb}(T,s) \gg_{\veck,\scrL}  T^{s-2}, 
\end{equation}
and thus any moment with $s>2$ diverges. In the case $s=2$, we have for any bounded interval $I\subset\R$
\begin{equation}\label{limit:main111}
\liminf_{T\to\infty} \M_{\leb}(T,2) > \sum_{r\in\Z_{\ge 0}^m} r^{2} X_\infty(r,I).
\end{equation}

The condition (A3) in Theorem \ref{th:main_directions} comes from the realization that a natural obstruction to convergence of moments is collinearity of $\vecxi$ with points of $\Z^2$. This situation is indeed ruled out by the condition: while it is certain that $\vecxi$ lies on a rational line, this rational line intersects $(\Z/d)^2$ for some $d\ge 2$, but not $\Z^2$. For example, $\vecxi=(\sqrt 2 +1/2,\sqrt 2 + 1)$ lies on the rational line $2x-2y=-1$, but this line clearly misses all the lattice points. 

The proof of Theorem \ref{th:main_directions} builds on the proof of Theorem \ref{th:prelim_directions}. In the proof, random variables $X_{N(T)}(\cdot,I)$ are approximately realized as a fixed function on a certain homogeneous space equipped with a $T$-dependent probability measure. The result (Theorem \ref{th:prelim_directions}) then follows from weak convergence of these probability measures, which means that integrals of a bounded continuous function with respect to these measures tend to the integral with respect to the limit measure. In fact to prove Theorem \ref{th:prelim_directions}, one needs to use functions which are bounded but not quite continuous. This is not a problem since the set of discontinuities of the these functions is small. To prove Theorem \ref{th:main_directions}, however, we need to use functions that are unbounded, which is a substantial complication. 

To explain the key step in the proof of Theorem \ref{th:main_directions}, define the restricted moments
\begin{equation}
\M_\lambda^{(K)}(T,\vecs):=\int_{\max_j X_{N(T)}(x,I_j)\le K}\limits\left(X_{N(T)}(x,I_1)\right)^{s_1} \cdots \left(X_{N(T)}(x,I_m)\right)^{s_m} \lambda(dx) .
\end{equation}
Theorem \ref{th:prelim_directions} now implies that, for any $K\ge 0$,
\begin{equation}\label{eq:main_directions2}
\lim_{T\to\infty} \M_\lambda^{(K)}(T,\vecs) = \sum_{\substack{\vecr\in\Z_{\ge 0}^m\\ |\vecr|\le K}} r_1^{s_1}\cdots r_m^{s_m} X_\infty(\vecr,I),
\end{equation}
where $|\vecr|$ denotes the maximum norm of $\vecr$. What thus remains to be shown in the proof of Theorem \ref{th:main_directions} is that under (A1), (A2), and (A3),
\begin{equation}\label{eq:main_directions3}
\adjustlimits\lim_{K\to\infty} \limsup_{T\to\infty} \left|\M_\lambda(T,\vecs)-\M_\lambda^{(K)}(T,\vecs)\right| = 0 .
\end{equation}

With \eqref{eq:poisson2directions}, Theorem \ref{th:main_directions} has the following implications:

\begin{cor}\label{Cor1}
Let $I=I_1\times I_2\subset  \R^2$ and $\lambda$ be as in Theorem \ref{th:main_directions}, and assume $\vecxi\in\R^2$ is Diophantine. Then
\begin{equation}
\lim_{T\to\infty} \int_\T X_{N(T)}(x,I_1) \, X_{N(T)}(x,I_2)\, \lambda(dx) = |I_1\cap I_2|+|I_1|\;|I_2| .
\end{equation}
\end{cor}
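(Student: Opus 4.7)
The plan is to deduce this corollary as a direct specialization of Theorem~\ref{th:main_directions} to $m=2$ and $\vecs=(1,1)$, combined with the explicit Poisson-type second moment identity~\eqref{eq:poisson2directions}. Observe that the integral in the corollary is literally $\M_\lambda(T,(1,1))$ in the notation preceding Theorem~\ref{th:main_directions}, so the whole question reduces to checking that one of the hypotheses (A1)--(A3) is applicable and then invoking the theorem.

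First I would verify the hypothesis. Since $s_1+s_2=2$, condition (A1) fails (it asks for strict inequality $<2$), so we must rely on the Diophantine improvement. By assumption $\vecxi\in\R^2$ is Diophantine, which means Diophantine of some type $\kappa$, necessarily with $\kappa\ge 2$ (because no $\vecxi\in\R^2$ is Diophantine of type $<2$, as noted after the definition). For any such finite $\kappa$ one has
\[
s_1+s_2 \;=\; 2 \;<\; 2+\frac{2}{\kappa},
\]
so (A2) is satisfied. Theorem~\ref{th:main_directions} therefore applies and gives
\[
\lim_{T\to\infty}\int_\T X_{N(T)}(x,I_1)\,X_{N(T)}(x,I_2)\,\lambda(dx)
\;=\; \sum_{\vecr\in\Z_{\ge 0}^2} r_1 r_2\, X_\infty(\vecr,I_1\times I_2).
\]

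Finally, by the formula~\eqref{eq:poisson2directions}, the right-hand side equals $|I_1\cap I_2|+|I_1|\,|I_2|$, which is the claim. There is no new obstacle in this deduction; the genuine work is encapsulated in Theorem~\ref{th:main_directions} (in particular in establishing~\eqref{eq:main_directions3}, namely that the restriction to $\max_j X_{N(T)}(x,I_j)\le K$ can be removed in the limit under (A2)) and in the identity~\eqref{eq:poisson2directions}, both of which are supplied by the earlier material. Thus the only step that needs to be written out is the verification that $\vecxi$ being Diophantine ensures admissibility of the exponent $s_1+s_2=2$ in hypothesis (A2).
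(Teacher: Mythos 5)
Your proposal is correct and follows exactly the route the paper intends: the corollary is stated immediately after \eqref{eq:poisson2directions} with the remark that it follows ``with \eqref{eq:poisson2directions}'' from Theorem~\ref{th:main_directions}, and your verification that $s_1+s_2=2<2+\frac{2}{\kappa}$ (so that (A2) applies for a Diophantine $\vecxi$ of any finite type $\kappa$) is the only point that needs to be said. Nothing is missing.
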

% 
% *** For $f\in C_0(\T^2\times\R)$ (continuous, real-valued and with compact support), we define the two-point correlation function 
% \beq R_N^2(f)=\frac1N\sum_{m\in\Z}\sum_{j_1\ne j_2} f(\alpha_{j_1},\alpha_{j_2},N(\alpha_{j_1}-\alpha_{j_2}+m)).\label{eq:twopoint}\eeq
% Recall that $N=N(T)$ and $\alpha_j=\alpha_j(T)$ depend on the choice of $T$. A standard argument (see Appendix \ref{324}) shows that Corollary \ref{Cor1} implies

With pair correlation defined as in \eqref{eq:pair_def}, we have

\begin{cor}\label{cor2}
Assume $\vecxi\in\R^2$ is Diophantine. Then, for any $f\in C_0(\R)$
\begin{equation}\label{eq:paircon_directions}
\lim_{T\to\infty}  R_{N(T)}^2(f) = \int_{\R} f(s)\, ds.
\end{equation}
\end{cor}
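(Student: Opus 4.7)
The plan is to apply Lemma \ref{lem:X_to_pair} with test functions that are indicators of intervals, invoke the second-moment limit of Corollary \ref{Cor1}, and then pass from the resulting trapezoidal test functions to general $f\in C_0(\R)$ by a monotone sandwiching argument, exactly as announced in the sentence preceding the corollary.

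First I would treat the special case $f=\chi_{I_1}*'\chi_{I_2}$ for bounded intervals $I_1,I_2\subset\R$. Setting $f_j=\chi_{I_j}$ in Lemma \ref{lem:X_to_pair} and using $\chi_{I_1}\chi_{I_2}=\chi_{I_1\cap I_2}$ together with $X_N(x,\chi_I)=X_N(x,I)$ gives, for $T$ sufficiently large,
\begin{equation*}
R^2_{N(T)}(\chi_{I_1}*'\chi_{I_2})=\int_\T X_{N(T)}(x,I_1)X_{N(T)}(x,I_2)\,dx-\int_\T X_{N(T)}(x,I_1\cap I_2)\,dx.
\end{equation*}
Corollary \ref{Cor1}, which is the only place the Diophantine hypothesis on $\vecxi$ enters, drives the first integral to $|I_1\cap I_2|+|I_1|\,|I_2|$, while the first-moment identity \eqref{expectation} (applied to Lebesgue measure on $\T$) drives the second to $|I_1\cap I_2|$. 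The two $|I_1\cap I_2|$ terms cancel, leaving $R^2_{N(T)}(f)\to|I_1|\,|I_2|=\int_\R(\chi_{I_1}*'\chi_{I_2})(s)\,ds$. Linearity of $R^2_N$ in $f$ immediately extends the conclusion to every finite real linear combination of such trapezoidal functions.

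Next I would extend to general $f\in C_0(\R)$ by a sandwich. The combinations just considered exhaust the continuous, compactly supported, piecewise-linear functions on $\R$, and a standard construction (piecewise-linear interpolation of $f$ on a fine grid, plus a trapezoidal perturbation controlled by the modulus of continuity of $f$) provides, for any $\varepsilon>0$, two such combinations $\phi^-\le f\le\phi^+$ with a common bounded support and $\int_\R(\phi^+-\phi^-)\,ds<\varepsilon$. Since each summand in \eqref{eq:pair_def} depends pointwise-monotonically on $f$, we have $R^2_{N(T)}(\phi^-)\le R^2_{N(T)}(f)\le R^2_{N(T)}(\phi^+)$, and letting $T\to\infty$ traps both the $\liminf$ and the $\limsup$ of $R^2_{N(T)}(f)$ within $\varepsilon$ of $\int_\R f(s)\,ds$. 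Sending $\varepsilon\to0$ finishes the proof.

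The main obstacle is not analytic but bookkeeping: the approximants $\phi^\pm$ must be constructed with uniformly bounded support, so that Corollary \ref{Cor1} applies component-by-component and the above cancellation is uniform in the approximation. Once this is arranged, the argument is a direct consequence of Lemma \ref{lem:X_to_pair} and Corollary \ref{Cor1}, with the Diophantine condition on $\vecxi$ entering only through the latter.
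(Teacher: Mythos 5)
Your proof is correct and follows essentially the same route the paper intends: apply Lemma~\ref{lem:X_to_pair} with indicator test functions, feed in the second-moment limit from Corollary~\ref{Cor1} (the only place the Diophantine hypothesis is used) and the first-moment limit \eqref{expectation}, cancel the two $|I_1\cap I_2|$ terms to get the Poisson answer for trapezoidal test functions, and then pass to general $f\in C_0(\R)$ by linearity and a monotone sandwich with piecewise-linear approximants. This is exactly the argument the paper sketches (``pair correlations for general functions \dots can be obtained as limits by an approximation argument''), deferring the details to Appendix~1 of \cite{distribution_of_directions}; you have merely spelled them out, including the verification that finite linear combinations of functions $\chi_{I_1}*'\chi_{I_2}$ span the continuous compactly supported piecewise-linear functions (their derivatives are exactly the compactly supported step functions of zero mean) and that the monotonicity of $f\mapsto R^2_N(f)$ makes the sandwich close.
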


This answers a recent question by Boca, Popa and Zaharescu \cite{boca_pair_2013}. Figure \ref{fig:pair} shows a numerical computation of the pair correlation statistics for $\vecxi=(\sqrt[3]4,\sqrt[3]2)$, $T=1000$, which is close to the limiting density $1$ predicted by Corollary \ref{cor2}. 

\begin{figure}[t]
\centerline{\includegraphics[width=0.9\textwidth]{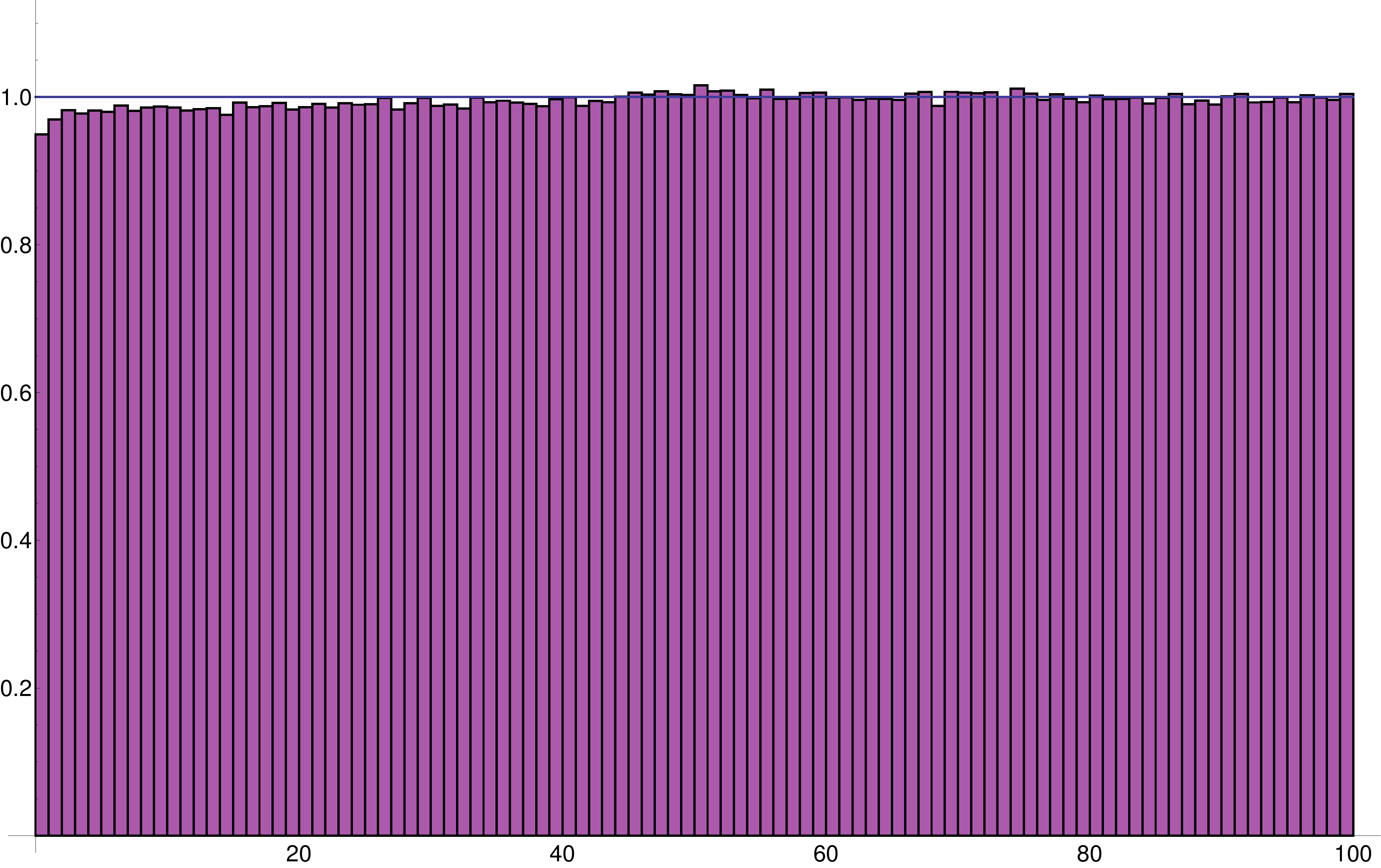}}
\caption{The figure shows a numerical computation of the pair correlation density, for $\vecxi=(\sqrt[3]4,\sqrt[3]2)$, $T=1000$. The computed density is close to $1$, as predicted by Corollary \ref{cor2}. Note that the displayed histogram can be obtained as the sum over all $k$-neighbor distributions. }
\label{fig:pair}
\end{figure}

% 
% \begin{remark}
% Boca and Zaharescu \cite{boca_correlations_2006} established the convergence of the pair correlation of directions in the lattice $\Z^2+\vecxi$  {\em on average} over $\vecxi$, in the case of lattice points in the square $[-T,T]^2$ (rather than a disc of radius $T$). Our approach can be adapted to this case, and to more general star shaped domains $\scrD$ dilated by $T$. Provided the projected lattice points in $T\scrD$ have a continuous limiting density $\rho_\scrD$ on $\T$, we have, under the conditions of Corollary~\ref{cor2},
% \begin{equation}\label{paircon2}
% \lim_{T\to\infty}  R_{N(T)}^2(f) = \int_{\T\times\R} f(\alpha,\alpha,s)\,\rho_\scrD(\alpha)^2\, d\alpha\, ds.
% \end{equation}
% For instance in the case when $\scrD$ is the square $[-1,1]^2$, we have 
% \begin{equation}
% \rho_\scrD(\alpha) = \frac{\pi}{4\cos^2[2\pi(\alpha-\nu)]} \qquad \text{if } \alpha\in[-\tfrac18,\tfrac18]+\nu,\quad \nu=0,\tfrac14,\tfrac12,\tfrac34. 
% \end{equation}
% In particular,
% \begin{equation}
% \int_{\T} \rho_\scrD(\alpha)^2\, d\alpha = \frac{\pi}{3},
% \end{equation}
% which yields the constant observed in \cite{boca_correlations_2006}.
% The proof of \eqref{paircon2} follows from Corollary \ref{cor2} by choosing test functions $f(\alpha,\beta,s)$ whose support in $\alpha$ and $\beta$ is in an $\epsilon$-neighborhood around any given $\alpha_0,\beta_0\in\T$, with $\epsilon>0$ arbitrarily small. 
% \end{remark}

In the next sections we explain how to construct a fixed function and a sequence of probability measures on a homogeneous space to realize $X_N(\cdot,I)$, as well as outline some ideas of the proof that allows the use of slowly growing functions in an equidistribution theorem.

\subsection{Space of affine lattices}

Let $G=\SL(2,\R)$ and $\Gamma=\SL(2,\Z)$. Define $G'=G\ltimes \R^2$ by 
\beq (M,\vecxi)(M',\vecxi')=(MM',\vecxi M'+\vecxi'),\eeq
and let $\Gamma'=\Gamma\ltimes \Z^2$ denote the integer points of this group. 
In the following, we will embed $G$ in $G'$ via the homomorphism $M\mapsto (M,\vecnull)$ and identify $G$ with the corresponding subgroup in $G'$.
We will refer to the homogeneous space $\Gamma\quot G$ as the space of lattices and $\Gamma'\quot G'$ as the space of affine lattices. The natural right action of $G'$ on $\R^2$ is given by  $\vecx\mapsto\vecx (M,\vecxi) := \vecx M + \vecxi$, with $(M,\vecxi)\in G'$.

%A natural action of $G'$ on $\R^2$ is defined by $\vecx\mapsto \vecx(M,\vecxi):=\vecx M +\vecxi$, where $g^{-1}=(M,\vecxi)$.

Given a bounded interval $I\subset\R$, define the triangle
\begin{equation}\label{CcI}
\triangle(I) =\{ (x,y)\in\R^2 : 0< x<1, \, y\in 2 x I   \}
\end{equation}
and set, for $g\in G'$ and any bounded subset $S\subset\R^2$,
\begin{equation}
X(g,S)= \# ( S \cap \Z^2 g) .
\end{equation}
By construction, $X(\cdot,S)$ is a function on the space of affine lattices, $\Gamma'\quot G'$.

Let 
\begin{equation}
\Phi^t =\begin{pmatrix} {e}^{-t/2} & 0 \\ 0 & {e}^{t/2} \end{pmatrix}, \qquad
k(\phi) = \begin{pmatrix} \cos\phi & -\sin\phi \\ \sin\phi & \cos\phi \end{pmatrix}.
\end{equation}

\begin{figure}
\centerline{\includegraphics[width=0.7\textwidth]{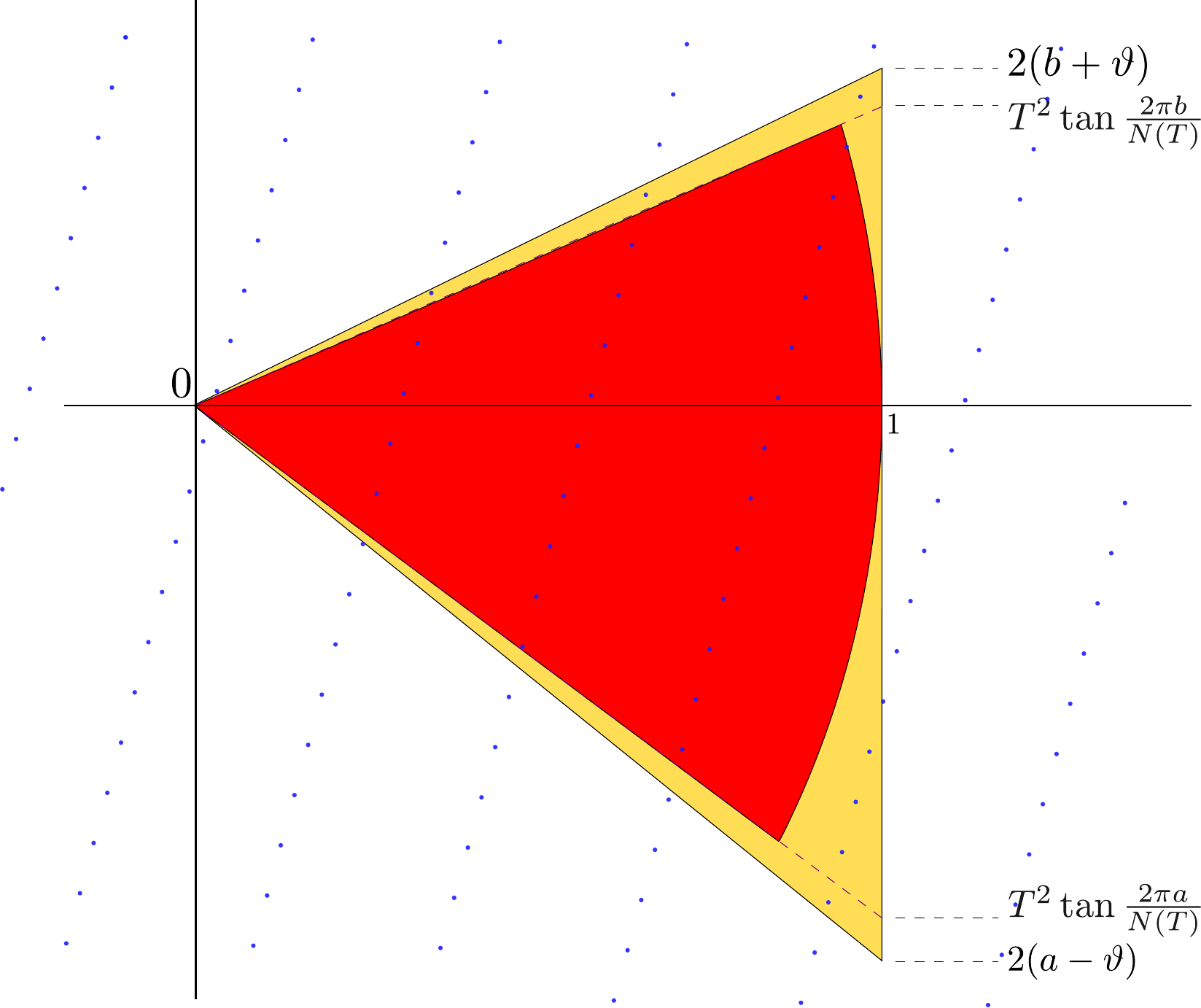}}
\caption{Here $I=[a,b]$ with $a<0<b$. The dark (red) area corresponds to counting in $X_{N(T)} (x,I)$, while the grey (yellow) triangle is the bound we use in \eqref{eq:crude_directions}.}
\label{fig:triangle}
\end{figure}

An elementary geometric argument shows that, given $I\subset\R$ and $\vartheta>0$, there exists $T_0>0$ such that for all $x \in\T$, $\vecxi\in\R^2$, $M_0\in \Gamma'\quot G'$ and $T={e}^{t/2}\ge T_0$,
\begin{equation}\label{eq:crude_directions}
X_{N(T)} (x,I) \le X \left((1,\vecxi) M_0 k(2\pi x)\Phi^{t}, \triangle(I + [-\vartheta,\vartheta]) \right).
\end{equation}
Indeed, the quantity on the left hand side counts the number of lattice points in a cone, while that on the right hand side counts lattice points in a triangle that properly contains the closure of this set. This is illustrated in Figure~\ref{fig:triangle}.
The observation \eqref{eq:crude_directions} relates our original counting function $X_{N(T)} (x,I)$ to a function on the space of lattices. Since we will only require upper bounds, the crude estimate \eqref{eq:crude_directions} is sufficient. A more refined statement is used in \cite[Sect.~9.4]{marklof_strombergsson_free_path_length_2010}, where sets $\triangle^{(t)}(I)$ are constructed such that $X_{N(T)} (x,I) = X\left((1,\vecxi) k(2\pi x)\Phi^{t}, \triangle^{(t)}(I) \right)$ and the sequence of sets $\triangle^{(t)}(I)$ converges to $\triangle(I)$  as $t\to\infty$. 

A convenient parametrization of $M\in G$ is given by the the Iwasawa decomposition
\begin{equation}\label{iwasawa}
M=n(u)a(v)k(\varphi)
\end{equation}
where
\begin{equation}
n(u)=\begin{pmatrix} 1 & u \\ 0 & 1 \end{pmatrix},\qquad
a(v)=\begin{pmatrix} v^{1/2} & 0 \\ 0 & v^{-1/2} \end{pmatrix} ,
\end{equation}
with $\tau=u+{i} v$ in the complex upper half plane $\H=\{ u+{i} v \in\C : v>0\}$ and $\phi\in[0,2\pi)$. 
A convenient parametrization of $g\in G'$ is then given by $\H\times [0,2\pi) \times \R^2$ via the decomposition
\begin{equation}\label{ida}
g = (1,\vecxi) n(u)a(v)k(\varphi)\eqqcolon(\tau,\phi;\vecxi).
\end{equation}
In these coordinates, left multiplication on $G$ becomes the (left) group action
\begin{equation}
g\cdot (\tau,\varphi;\vecxi)=(g\tau,\varphi_g;\vecxi g^{-1})
\end{equation}
where for 
\begin{equation}
g =(1,\vecm)\abcd
\end{equation}
we have:
\begin{equation}
g\tau = u_g+{i} v_g = \frac{a\tau+b}{c\tau+d}
\end{equation}
and thus
\begin{equation}
v_g=\im (g \tau)=\frac{v}{|c\tau+d|^2};
\end{equation}
furthermore
\begin{equation}
\varphi_g=\varphi+\arg(c\tau+d),
\end{equation}
and
\begin{equation}
\vecxi g^{-1} = (d\xi^1-c\xi^2,-b\xi^1+a\xi^2) - \vecm  .
\end{equation}

The space of lattices has one cusp, which in the above coordinates appears at $v\to\infty$. The following lemma tells us that $X(g,S)$ is bounded in the cusp unless $-\xi^1$ is close to an integer, in which case the function is at most of order $v^{1/2}$.

\begin{lemma}\label{lem:upper}
For any bounded $S\subset\R^2$, $g=(1,\vecxi)(M,0)\in G'$ with $M$ as in \eqref{iwasawa} and $v\ge 1$,
\begin{equation}\label{eq101}
X(g,S) \le (2 r v^{1/2}+1) \, \# ((\Z+\xi^1) \cap [-rv^{-1/2},rv^{-1/2}])
\end{equation}
where $r=\sup \{\|\vecx\| : \vecx\in S \}$. If $v> 4 r^2$ then, for any $ s \ge 0$,
\begin{equation}\label{eq102}
\left(X(g,S)\right)^ s  \le (2 r v^{1/2}+1)^ s  \, \# ((\Z+\xi^1) \cap [-rv^{-1/2},rv^{-1/2}]).
\end{equation} 
\end{lemma}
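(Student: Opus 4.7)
The plan is to convert the lattice-point count into a count inside a transformed region $SM^{-1}$ and then bound that via the horizontal-strip structure that the Iwasawa coordinates impose.

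First I would unpack the group law: by $(1,\vecxi)(M,0)=(M,\vecxi M)$ and the right action $\vecx\cdot (M',\vecxi')=\vecx M'+\vecxi'$, one has $\Z^2 g=(\Z^2+\vecxi)M$, so
\[
X(g,S)=\#\bigl\{\vecm\in\Z^2:(\vecm+\vecxi)M\in S\bigr\}=\#\bigl((\Z^2+\vecxi)\cap SM^{-1}\bigr).
\]
The task is therefore to control the set $SM^{-1}$. Writing $M^{-1}=k(-\varphi)\,a(v^{-1})\,n(-u)$ and recalling $S\subset\{\vecx:\|\vecx\|\le r\}$: the rotation $k(-\varphi)$ is an isometry and keeps us inside the disc of radius $r$; the diagonal $a(v^{-1})$ scales the first coordinate by $v^{-1/2}$ and the second by $v^{1/2}$, turning the disc into an ellipse of semiaxes $rv^{-1/2}$ (horizontal) and $rv^{1/2}$ (vertical); finally $n(-u)\colon(x,y)\mapsto(x,-ux+y)$ is a vertical shear which preserves the horizontal extent and, crucially, preserves the vertical length of every vertical slice. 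Consequently $SM^{-1}$ sits inside the strip $\{(x,y):|x|\le rv^{-1/2}\}$ and at each first coordinate its vertical cross-section has length at most $2rv^{1/2}$.

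Then I would count. A point $(m_1+\xi^1,m_2+\xi^2)\in(\Z^2+\vecxi)\cap SM^{-1}$ forces $m_1+\xi^1\in[-rv^{-1/2},rv^{-1/2}]$, which gives exactly $\#\bigl((\Z+\xi^1)\cap[-rv^{-1/2},rv^{-1/2}]\bigr)$ admissible values of $m_1$. For each such $m_1$, the admissible $m_2+\xi^2$ lie in an interval of length at most $2rv^{1/2}$, so there are at most $2rv^{1/2}+1$ of them. Multiplying gives \eqref{eq101}.

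For \eqref{eq102} I would use the hypothesis $v>4r^2$, which is precisely the threshold $rv^{-1/2}<1/2$. Then the interval $[-rv^{-1/2},rv^{-1/2}]$ has length less than $1$, and $\#\bigl((\Z+\xi^1)\cap[-rv^{-1/2},rv^{-1/2}]\bigr)\in\{0,1\}$. In the first case \eqref{eq101} already forces $X(g,S)=0$ and both sides of \eqref{eq102} vanish; in the second case \eqref{eq101} yields $X(g,S)\le 2rv^{1/2}+1$, whence raising to the $s$-th power and noting that the count equals $1$ gives \eqref{eq102} on the nose. There is no genuine obstacle here; the proof is essentially a geometric bookkeeping, and the only points requiring care are keeping the conventions of the right action $(M,\vecxi)$ straight and recognizing that the shear $n(-u)$ preserves vertical slice lengths so that the two factors in the bound truly decouple.
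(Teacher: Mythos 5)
Your proof is correct and is essentially the paper's own argument in dual form: the paper bounds $S$ by the disk $D_r$, drops $k(\varphi)$ by rotational symmetry, applies $a(v)^{-1}$ to turn $[-r,r]^2$ into the strip $[-rv^{-1/2},rv^{-1/2}]\times[-rv^{1/2},rv^{1/2}]$, and uses that $n(u)$ preserves the first coordinate to decouple the two counts; you carry out the identical bookkeeping on the region $SM^{-1}$ rather than on the shifted lattice. The handling of \eqref{eq102} via the dichotomy $\#\bigl((\Z+\xi^1)\cap[-rv^{-1/2},rv^{-1/2}]\bigr)\in\{0,1\}$ when $v>4r^2$ is exactly what the paper uses.
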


\begin{proof}
Let $D_r$ be the smallest closed disk of radius $r$ centered at zero which contains $S$. Then,
\begin{equation}
\begin{split}
X(g,S) & \le X(g,D_r) \\
& = \# ( D_r \cap (\Z^2+\vecxi) n(u) a(v) ) \\
& \le \# ( [-r,r]^2 \cap (\Z^2+\vecxi) n(u) a(v) ) \\
& = \# ( ([-rv^{-1/2},rv^{-1/2}]\times[-rv^{1/2},rv^{1/2}]) \cap (\Z^2+\vecxi) n(u) ) \\
& \le \sup_{\xi^2} \# ( ([-rv^{1/2},rv^{1/2}]) \cap (\Z+\xi^2) )\times \# ( [-rv^{-1/2},rv^{-1/2}] \cap (\Z+\xi^1) ) \\
& \le (2r v^{1/2}+1)\times \# ( [-rv^{-1/2},rv^{-1/2}] \cap (\Z+\xi^1) ) .
\end{split}
\end{equation}
This proves \eqref{eq101}. The second inequality \eqref{eq102} follows from the fact that $\# ((\Z+\xi^1) \cap [-rv^{-1/2},rv^{-1/2}])\in\{0,1\}$.
\end{proof}

To deal with the case of mixed moments, we note that 
\begin{equation}
\left(X(g,S_1)\right)^{ s _1}\cdots \left(X(g,S_m)\right)^{ s _m}\le \left(X(g,S_1\cup\cdots\cup S_m)\right)^{ s _1+\ldots+ s _m}. 
\end{equation}
% 
% If $\fC$ is a particularly nice set (such as the triangles/trapezoids discussed above), the previous lemma can be sharpened to an asymptotic statement for $v\to\infty$. 
% 
% ***remove the following:
% 
% \begin{lemma}\label{lem:asymp}
% For $g=(1,\vecxi)(M,0)\in G'$ with $M$ as in \eqref{iwasawa}, $\xi_1\in[-\frac12,\frac12]$ and $v\to\infty$,
% \begin{equation}\label{eq10101}
% \scrN(g,\fC_c(I)) = v^{1/2} \int_\R \chi_\phi( \xi_1 v^{1/2}  , t ) \, dt + O(1).
% \end{equation}
% where $\chi_\phi$ is the characteristic function of the set $\fC_c(I) k(\phi)^{-1}$.
% \end{lemma}
% 
% \begin{proof}
% Note that
% \begin{equation}\label{eq1010111}
% \scrN(g,\fC_c(I)) = \sum_{\vecm\in\Z^2} \chi_\phi( (m_1+\xi_1) v^{1/2}  , [(m_1+\xi_1) u + (m_2+\xi_2)] v^{-1/2} ) . 
% \end{equation}
% For $v$ sufficiently large, the sum over $m_1$ has at most one non-zero term, which under the assumption $\xi_1\in[-\frac12,\frac12]$ is the term $m_1=0$. The sum over $m_2$ is then readily estimated. 
% \end{proof}

\subsection{Escape of mass}

We define the abelian subgroups
\[\Gamma_\infty= \left\{\begin{pmatrix}1 &m\\ 0 &1\end{pmatrix}: m\in\Z\right\}\subset\Gamma  \]
and
\[\Gamma_\infty'= \left\{\left(\begin{pmatrix}1 &m_1\\ 0 &1\end{pmatrix}, (0,m_2) \right): (m_1,m_2)\in\Z^2\right\}\subset\Gamma' . \]  
These subgroups are the stabilizers of the cusp at $\infty$ of $\Gamma\quot G$ and  $\Gamma'\quot G'$, respectively.

Denote by $\chi_R$ the characteristic function of $[R,\infty)$ for some $R\ge 1$, i.e.~$\chi_R(v)=0$ if $v<R$ and $\chi_R(v)=1$ if $v\ge R$. 
For a fixed real number $\beta$ and a continuous function $f:\R\to\R$ of rapid decay at $\pm\infty$, define the function $F_{R,\beta}\colon \H\times\R^2\to \R$ by 
\begin{equation}\label{eq:bounding_function}
\begin{split}
F_{R,\beta}\left(\tau; \vecxi\right)&=\sum_{\gamma\in\Gamma_\infty\quot \Gamma}\sum_{m\in\Z}f (((\vecxi\gamma^{-1})_1+m)v^{1/2}_\gamma) v^\beta_\gamma\chi_R(v_\gamma) \\
&=\sum_{\gamma\in\Gamma_\infty'\quot \Gamma'} f_\beta(\gamma g) ,
\end{split}
\end{equation}
where $f_\beta:G'\to\R$ is defined by 
\begin{equation}
f_\beta((1,\vecxi) n(u)a(v)k(\varphi)) := f(\xi^1 v^{1/2}) v^\beta \chi_R(v).
\end{equation}
We view $F_{R,\beta}\left(\tau; \vecxi\right)=F_{R,\beta}\left(g\right)$ as a function on $\Gamma'\quot G'$ via the identification \eqref{ida}. 

The main idea behind the definition of $F_{R,\beta}\left(\tau; \vecxi\right)$ is that we have for $v\ge 1$
\begin{equation}
F_{R,\beta}\left(\tau; \vecxi\right) = \sum_{m\in\Z} [f((\xi^1+m)v^{1/2})+f((-\xi^1+m)v^{1/2})] v^\beta \chi_R(v) ,
\end{equation}
which shows that, for an appropriate choice of $f$ depending on $S_1,\dots,S_m$ and $\beta=\frac12( s _1+\ldots+ s _m)$, and $v\ge R$ with $R$ sufficiently large,
\begin{equation}\label{eq:Fcontrol_directions}
\left(X(g,S_1)\right)^{ s _1}\cdots \left(X(g,S_m)\right)^{ s _m} \le F_{R,\beta}\left(\tau; \vecxi\right).
\end{equation}
Therefore $F_{R,\beta}$ is the fixed function that controls moments. 

The following proposition establishes under which conditions there is no escape of mass in the equidistribution of horocycles. It generalizes results in \cite{marklof_pair_correlation_II_2002, marklof_pair_correlation_2003, marklof_mean_square_2005}.

\begin{prop}[El-Baz, Marklof, V. {\cite[Prop.~6]{distribution_of_directions}}]
\label{prop:main_homogeneous_directions}
Let $\vecxi\in\R^2$, $\beta\ge 0$, $M\in G$, and $h\in C_0(\R)$. Assume that one of the following hypotheses holds:
\begin{enumerate}[{\rm ({B}1)}]
\item  $\beta<1$.
\item $\vecxi$ is Diophantine of type $\kappa$, and $\beta<1+\frac1{\kappa}$. 
\item $\vecxi=\vecn \omega +\vecl$ where $\vecn\in\Z^2 \setminus \{ \vecnull \}$ and $\vecl\in\Q^2$ so that $\det(\vecn,\vecl)\notin\Z$, and $\omega\in\R$ is Diophantine of type $\frac{\kappa}{2}$, and $\beta<1+\frac1{\kappa}$.
\end{enumerate}
Then
\beq \label{eq:horospherical}
\adjustlimits\lim_{R\to \infty}\limsup_{v\to 0} \bigg| \int_{u\in\R} F_{R,\beta}\left((1,\vecxi)Mn(u)a(v)\right) h(u)du \bigg|=0.
\eeq
\end{prop}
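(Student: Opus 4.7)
The plan is to exploit the Siegel-type identity $F_{R,\beta}(g) = \sum_{\gamma\in\Gamma_\infty'\backslash\Gamma'} f_\beta(\gamma g)$ to unfold the horocycle integral, and then bound the resulting sum under each Diophantine hypothesis. After a change of variables absorbing $M$ into the horocycle parameter, I set $\tau = u + \i v$ and parametrize $\Gamma_\infty\backslash\Gamma$ by primitive pairs $(c,d)\in\Z^2$, so that $v_\gamma = v/((cu+d)^2 + c^2 v^2)$ and $(\vecxi\gamma^{-1})_1 = d\xi^1 - c\xi^2 - m_1$. The compact support of $f$ in some $[-L,L]$ restricts the inner sum over $m_1$ to at most one value when $v_\gamma$ is large, and makes each term vanish unless $|d\xi^1 - c\xi^2 - m_1|\le L v_\gamma^{-1/2}$. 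Thus the problem reduces to bounding
\begin{equation*}
\sum_{(c,d)\text{ primitive}} \int_{\{u:\, v_\gamma(u)\ge R\}} h(u)\, v_\gamma^\beta\,\mathbf{1}_{\|d\xi^1-c\xi^2\|\le L v_\gamma^{-1/2}}\, du.
\end{equation*}

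Geometrically, $\{u : v_\gamma\ge R\}$ is an interval of length $\lesssim \sqrt{v/R}/|c|$ near $u = -d/c$, and a direct computation bounds the $u$-integral by an elementary expression in $|c|$, $v$, and $R$. Under hypothesis (B1), the threshold $\beta<1$ suffices to make the total sum tend to zero as $R\to\infty$ uniformly as $v\to 0$. Under (B2), the Diophantine inequality $|d\xi^1-c\xi^2-m_1|\gg \|(c,d)\|^{-\kappa}$ combined with the indicator forces $\|(c,d)\|^2\gg v_\gamma^{1/\kappa}\ge R^{1/\kappa}$, restricting the $(c,d)$-sum to larger primitive pairs, yielding an extra factor of $R^{-1/\kappa}$ and permitting $\beta<1+1/\kappa$. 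Under (B3), cosets fall into two classes: those with $(c,d)\parallel \vecn$, where $d\xi^1-c\xi^2$ reduces modulo $\Z$ to a nonzero rational determined by $\det(\vecn,\vecl)\notin\Z$ and is thus bounded away from integers; and those with $dn_1-cn_2\ne 0$, where the Diophantine property of $\omega$ supplies the analogous lower bound $|d\xi^1-c\xi^2-m_1|\gg |dn_1-cn_2|^{-\kappa/2}$, yielding the same quantitative gain as in (B2).

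The main obstacle will be the sharp bookkeeping required to sum $v_\gamma^\beta$ over primitive $(c,d)$ subject to both the geometric constraint $v_\gamma\ge R$ and the Diophantine-derived arithmetic constraint on $d\xi^1-c\xi^2$, uniformly as $v\to 0$ before $R\to\infty$. Case (B3) in particular requires a careful balance between the two classes of cosets, since the pairs with $(c,d)\parallel \vecn$ are few in number but contribute at a fixed shift, whereas the second class is handled by essentially (B2)-type estimates with $\omega$ in place of $\vecxi$. Overall this approach extends the horocycle moment estimates developed in \cite{marklof_pair_correlation_II_2002, marklof_mean_square_2005} to the shifted affine-lattice setting.
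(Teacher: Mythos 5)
Your proposal follows essentially the same route as the paper: unfold the Siegel-type sum $F_{R,\beta}=\sum_{\gamma\in\Gamma_\infty'\backslash\Gamma'}f_\beta(\gamma\,\cdot)$, parametrize $\Gamma_\infty\backslash\Gamma$ by primitive pairs $(c,d)$, reduce to an incomplete Eisenstein-type expression, and then use the Diophantine hypothesis to restrict the pairs $(c,d)$ that can reach the cusp region $v_\gamma\ge R$. The estimate on the interval $\{u:v_\gamma\geq R\}$ and the role of the indicator $\|d\xi^1-c\xi^2\|\le Lv_\gamma^{-1/2}$ are correct, and the dichotomy you set up in (B3) between the finitely many primitive pairs parallel to $\vecn$ (killed by $\det(\vecn,\vecl)\notin\Z$) and those with $dn_1-cn_2\neq 0$ (handled by the Diophantine condition on $\omega$) is the right one.

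There are, however, two points worth flagging. First, the step ``absorbing $M$ into the horocycle parameter'' is not a clean change of variables. After left-multiplying by $M=\bigl(\begin{smallmatrix}a_M&b_M\\c_M&d_M\end{smallmatrix}\bigr)$, the Iwasawa height of $Mn(u)a(v)$ is $v/\bigl((c_Mu+d_M)^2+c_M^2v^2\bigr)$ rather than $v$, so the transported horocycle is not at constant height; moreover, in the unfolded sum the geometric quantity $v_\gamma$ is now governed by the \emph{real} pair $(c,d)M$ while the arithmetic quantity $(\vecxi\gamma^{-1})_1=d\xi^1-c\xi^2$ still involves the \emph{integer} pair $(c,d)$. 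This mismatch is precisely why the paper treats $M=1$ first and then notes that ``more calculation'' is required for general $M$; your proposal glosses over this. Second, the quantitative heart of the matter — summing $v_\gamma^\beta$ over primitive $(c,d)$ subject simultaneously to $v_\gamma\ge R$, $|c|\lesssim(vR)^{-1/2}$, and the Diophantine lower bound on $\|d\xi^1-c\xi^2\|$ — requires a dyadic decomposition in both $|c|$ and $\|d\xi^1-c\xi^2\|$ with careful accounting of how many pairs sit in each box. In (B3) in particular, the relevant Diophantine quantity is $|dn_1-cn_2|$, which can be much smaller than $\|(c,d)\|$ for pairs near the line through $\vecn$; your phrase ``the same quantitative gain as in (B2)'' hides a genuinely different balance, since for each fixed nonzero value of $k=dn_1-cn_2$ there is an entire line of pairs $(c,d)$ and the contribution must be organized by $k$ rather than by $\|(c,d)\|$. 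These are not fatal flaws, but they are the substantive parts of the argument and cannot be dismissed as bookkeeping.
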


%note on proof

The proof is broken up into several parts. When $\beta<1$, we control $F_{R,\beta}$ by a function that is independent of $\vecxi$ and use Eisenstein series to control the integral. Under assumptions (B2) and (B3), we first consider the case $M=1$; this is the bulk of the proof. Here we write out the definition of $F_{R,\beta}$ at the relevant point in $\Gamma'\quot G'$ and prove a Lemma that uses the Diophantine condition on $\vecxi$ and eventually lets us control excursions to the cusp. More calculation lets us take general $M$ in the statement as well as replace the horospherical average in \eqref{eq:horospherical} by a spherical one, which is what controls points in a large Euclidean ball. 

\section{$\sqrt n$ modulo 1\label{sec:sqrtn}}

%moments

\subsection{Setup}

% \begin{figure}
% \begin{centering}\includegraphics[width=0.9\textwidth]{power13}\par\end{centering}
% \caption{Gap distribution of the fractional parts of $n^{1/3}$ with $n\leq 2\times 10^5$.  \label{fig1}}
% \end{figure}
% \begin{figure}
% \begin{centering}\includegraphics[width=0.9\textwidth]{elkiesmcmullen}\par\end{centering}
% \caption{Gap distribution of the fractional parts of $\sqrt n$ with $n\leq 2\times 10^5$.  \label{fig2}}
% \end{figure}
\begin{figure}
\begin{centering}\includegraphics[width=0.9\textwidth]{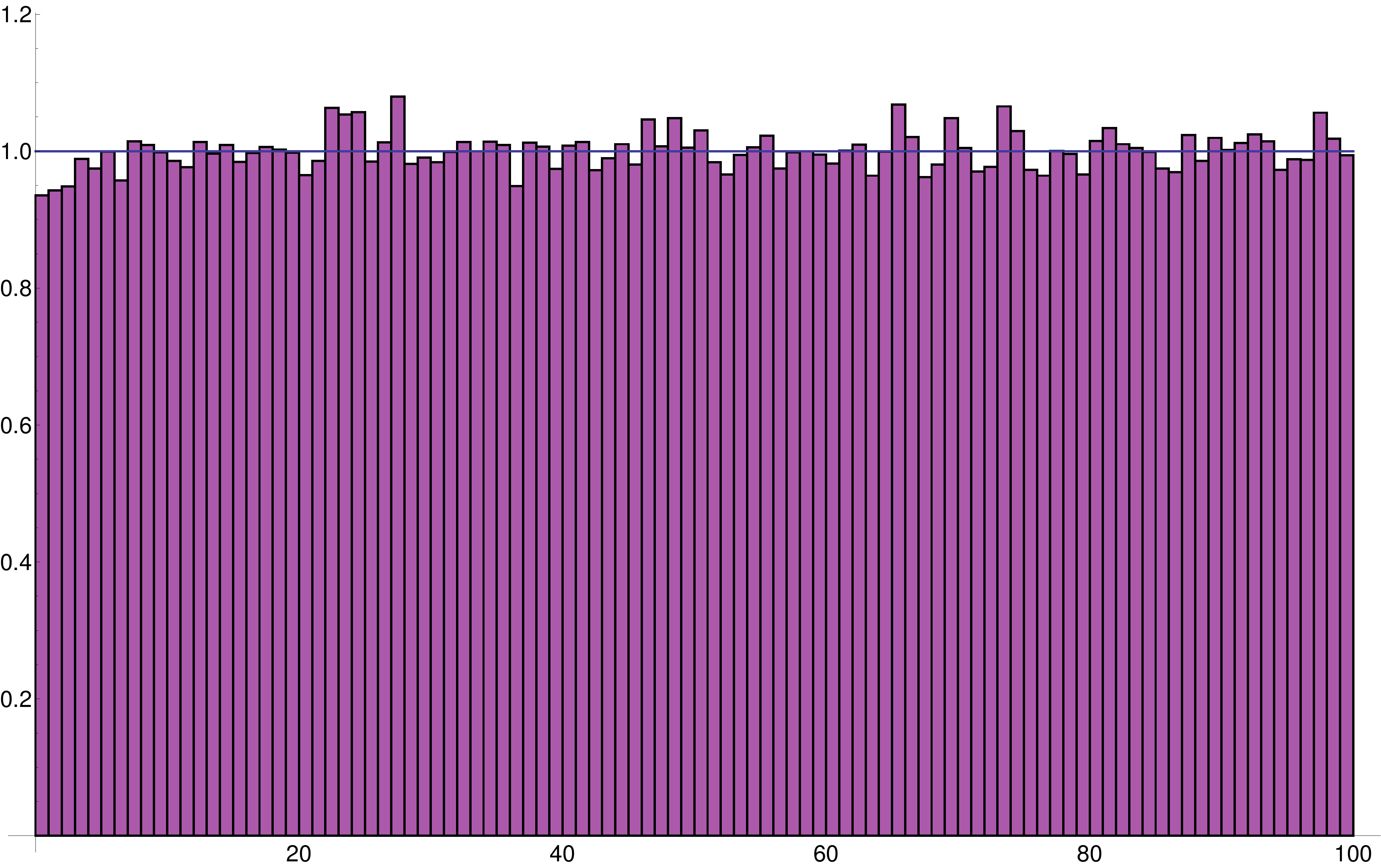}\par\end{centering}
\caption{Two-point correlations of the fractional parts of $\sqrt n$ with $n\leq 2000$, $n\notin\Box$.  \label{fig3}}
\end{figure}

In this section we analyze local statistics of $\xi_n=\sqrt n\bmod 1$ following the treatment in \cite{el-baz_two-point_2013}.
To describe our results, let us first note that $\sqrt{n}=0\bmod 1$ if and only if $n$ is a perfect square. We will remove this trivial subsequence and consider the set
 \begin{equation}
 \scrP_T=\{\sqrt n \bmod 1 : 1\le n\le T,\; n\notin\Box\}\subset \T:=\R/\Z
 \end{equation}
 where $\Box\subset\N$ denotes the set of perfect squares. The cardinality of $\scrP_T$ is $N(T)=T-\lfloor \sqrt T\rfloor$. We label the elements of $\scrP_T$ by $\xi_1,\ldots,\xi_{N(T)}$. The pair correlation density $R_{N(T)}^2(f)$ of the $\xi_j$ is defined by as in \eqref{eq:pair_def}
where $f\in C_0(\R)$ (continuous with compact support). 
Our first result establishes that $R_{N(T)}^2$ converges weakly to the two-point density of a Poisson process:

\begin{theorem}\label{th:pair_sqrt}
For any $f\in C_0(\R)$,
\begin{equation}\label{eq:paircon_sqrt}
\lim_{T\to\infty}  R_{N(T)}^2(f) = \int_{\R} f(s)\, ds.
\end{equation}
\end{theorem}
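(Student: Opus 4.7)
The plan is to convert $R_{N(T)}^{2}(f)$ into a lattice-point count in short intervals, extract a main term by treating those counts as lengths, and show the error averages to zero by a Poisson-summation / Weyl-sum argument. First, for $N:=N(T)$ large, the sum over $m\in\Z$ in \eqref{eq:pair_def} has at most one nonzero summand per pair $(n_1,n_2)$. Writing $\xi_{n_i}=\sqrt{n_i}-\lfloor\sqrt{n_i}\rfloor$ and absorbing the floor functions into a new integer index $k$, one rewrites
\begin{equation*}
R_{N}^{2}(f)=\frac{1}{N}\sum_{\substack{n_1\ne n_2\in[1,T]\setminus\Box\\ k\in\Z}}f\bigl(N(\sqrt{n_1}-\sqrt{n_2}-k)\bigr).
\end{equation*}
Since $|\sqrt{n_1}-\sqrt{n_2}|=|n_1-n_2|/(\sqrt{n_1}+\sqrt{n_2})\ge(2\sqrt T)^{-1}\gg N^{-1}$ when $n_1\ne n_2$, the index $k=0$ contributes nothing for large $N$. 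By the symmetry $(n_1,n_2,k)\leftrightarrow(n_2,n_1,-k)$ it suffices to analyse the contribution from $n_1>n_2$ and $k\ge 1$, then double.

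Next, I would extract the main term. For fixed $k\ge 1$ and $n_2$, the constraint restricts $n_1$ to an interval of length $\approx 2(\sqrt{n_2}+k)\,|\mathrm{supp}\,f|/N$. Replacing the inner sum by an integral via the change of variable $u=N(\sqrt{n_1}-\sqrt{n_2}-k)$ gives an ``expected'' weighted count of $(2(\sqrt{n_2}+k)/N)\int_\R f(u)\,du$ per pair $(n_2,k)$. Summing over $(n_2,k)$ with $k\ge 1$, $\sqrt{n_2}+k\le\sqrt T$ then reduces to the elementary integral $\int_0^{\sqrt T}\!\int_0^{(\sqrt T-k)^2}(\sqrt t+k)\,dt\,dk=T^2/4$; multiplying by $(2/N^2)\int f$ and doubling for symmetry yields the claimed main term $\int_\R f(s)\,ds$.

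It remains to show the actual count matches this expected value. Since each $n_1$-interval has length well below $1$, the per-pair discrepancy is $O(1)$ but the number of contributing pairs is $\gg T$, so the trivial bound fails by a power of $T$ and genuine cancellation must be extracted. Applying Poisson summation in $n_1$, the $\ell=0$ dual frequency reproduces the main term above, while the $\ell\ne 0$ frequencies reduce, after a stationary-phase analysis, to exponential sums
\begin{equation*}
\sum_{n_2}e^{2\pi i\ell(\sqrt{n_2}+k)^2}=e^{2\pi i\ell k^2}\sum_{n_2}e^{4\pi i\ell k\sqrt{n_2}}
\end{equation*}
(using $e^{2\pi i\ell n_2}=1$ for $\ell\in\Z$), which are classical Weyl sums in $\sqrt{n_2}$ amenable to Van der Corput's method and produce square-root cancellation. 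An alternative route, followed in \cite{el-baz_two-point_2013}, recasts the counting problem as an equidistribution statement for a family of closed horocycles on $\SL(2,\Z)\backslash\SL(2,\R)$ and invokes Sarnak's theorem.

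The main obstacle is obtaining uniform cancellation across the full range $1\le|\ell|\lesssim N$ and $1\le k\lesssim\sqrt T$, particularly near the transition $|\ell|(\sqrt{n_2}+k)\sim N$ where neither the trivial nor the stationary-phase bound alone is sharp. This is the analytic counterpart of the cusp-excursion control effected by Proposition \ref{prop:main_homogeneous_directions} in the directions problem, and a careful cutoff separating bulk from boundary contributions is required.
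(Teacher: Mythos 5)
Your route is genuinely different from the paper's. The paper obtains Theorem~\ref{th:pair_sqrt} as a corollary of the moment convergence in Theorem~\ref{th:main0}, via Lemma~\ref{lem:X_to_pair} and an approximation argument; Theorem~\ref{th:main0} itself is established by realizing the restricted moments as integrals of a fixed unbounded function $F_{R,\beta}$ over translates $\tilde n(u)\Phi^t$ of a non-linear horocycle in $\Gamma'\quot G'$, combining the Elkies--McMullen equidistribution theorem with the escape-of-mass estimate of Proposition~\ref{prop:main_homogeneous_sqrt}. You instead try a direct Poisson-summation/Weyl-sum argument. The setup and the main-term extraction in your proposal are correct: the $k=0$ term does vanish since $|\sqrt{n_1}-\sqrt{n_2}|\ge(2\sqrt T)^{-1}\gg N^{-1}$, the $\ell=0$ dual frequency gives the Jacobian weight $2(\sqrt{n_2}+k)/N$, and the elementary integral $\int_0^{\sqrt T}\!\int_0^{(\sqrt T-k)^2}(\sqrt t+k)\,dt\,dk=T^2/4$ checks out, delivering $\int_\R f$ after symmetrizing.

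The error bound, however, is a genuine gap and not merely an unfinished detail. The claim that Van der Corput ``produces square-root cancellation'' and that a careful cutoff then suffices does not close the argument. Take the second-derivative (van der Corput) bound $\sum_{n_2\le M}e^{4\pi i\ell k\sqrt{n_2}}\ll(\ell k)^{1/2}M^{1/4}+M^{3/4}(\ell k)^{-1/2}$ (after dyadic decomposition), multiply by the Jacobian weight $\lesssim\sqrt T$ obtained from partial summation, sum over $1\le k\le\sqrt T$ and $1\le|\ell|\lesssim N/\sqrt T\approx\sqrt T$, and divide by $N^2\sim T^2$; the first term alone contributes on the order of $T^{-3/2}\cdot T^{1/4}\cdot(\sqrt T)^{3/2}\cdot(\sqrt T)^{3/2}=T^{1/4}$, which diverges. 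More generally, inserting an exponent pair $(\kappa,\lambda)$ yields an error of order $T^{\lambda+\kappa/2-1/2}$; even the conjecturally optimal $(\kappa,\lambda)=(\epsilon,\tfrac12+\epsilon)$ gives only $O(T^\epsilon)$, never $o(1)$. So pointwise cancellation in the $n_2$-sum is structurally insufficient: one must extract additional cancellation across $\ell$ and $k$, and the obstruction lives throughout the bulk range, not just at the transition $|\ell|(\sqrt{n_2}+k)\sim N$. This is precisely what the paper's escape-of-mass mechanism supplies, via the counting estimate for solutions of $d^2\equiv j\pmod c$ and cancellation in Gauss sums inside the proof of Proposition~\ref{prop:main_homogeneous_sqrt}; you would have to reproduce an analogue of that input on the exponential-sum side. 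There is also the smaller point that $f\in C_0(\R)$ is not smooth, so Poisson summation and the stationary-phase truncation need a preliminary two-sided approximation of $f$ by smooth compactly supported functions; this is routine but should be stated.
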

% 
% Both the convergence of the gap distribution and of the two-point correlations follow from a more general statistics, namely, the probability of finding $r$ elements $\alpha_j$ in randomly placed intervals of size proportional to $1/N(T)$. Given a bounded interval $I \subset \R$, define the subinterval $J=J_{N(T)}(I,\alpha)=N(T)^{-1} I +\alpha+\Z\subset\T$ of length $N(T)^{-1}|I|$, and let
% \begin{equation}
% \scrN_{T}(I,\alpha) = \# \scrP_T\cap J_{N(T)}(I,\alpha) .
% \end{equation}

It is proved in \cite{ElkiesMcM04} that, for $x$ uniformly distributed in $\T$ with respect to the Lebesgue measure $\lambda$, the random variable $X_{N(T)}(x,I)$ has a limit distribution $X_\infty(\cdot,I)$. That is to say, for every $r\in\Z_{\geq 0}$,
\begin{equation}\label{eq:one_interval_sqrt}
\lim_{T\to\infty}\lambda(\{x\in\T : X_{N(T)}(x,I) = r\}) = X_\infty(r,I).
\end{equation}
As Elkies and McMullen point out, these results hold in fact for several test intervals $I_1,\ldots, I_m$:

\begin{theorem}[Elkies and McMullen \cite{ElkiesMcM04}] \label{th:prelim}
Let $I=I_1\times\cdots \times I_m\subset \R^m$ be a bounded box. Then there is a probability distribution $X_\infty(\cdot,I)$ on $\Z_{\geq 0}^m$ such that, for any $\vecr=(r_1,\ldots,r_m)\in\Z_{\geq 0}^m$
\begin{equation}
\lim_{T\to\infty}\lambda(\{ x\in\T : X_{N(T)}(x,I_1) = r_1,\ldots,X_{N(T)}(x,I_m) =r_m\}) = X_\infty(\vecr,I).
\end{equation}
\end{theorem}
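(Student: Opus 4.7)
I will follow Elkies and McMullen's approach and reduce the counting problem to an equidistribution statement on the space of lattices $\Gamma\backslash G=\SL(2,\Z)\backslash\SL(2,\R)$. The first step is to use the Taylor expansion: for $n = k^2 + j$ with $1 \le j \le 2k$,
\[
 \sqrt{n} = k + \frac{j}{2k} - \frac{j^2}{8k^3} + O(j^3/k^5),
\]
so that $\xi_n \equiv \frac{j}{2k} - \frac{j^2}{8k^3}\pmod 1$ with an error of size $o(1/N(T))$ uniformly for $k\le\sqrt{T}$. Counting $n\le T$ with $N(T)(\xi_n-x)\in I_\ell\pmod{N(T)\Z}$ therefore amounts, up to negligible error, to counting integer pairs $(j,k)$ with $1\le j\le 2k$, $1\le k\le\sqrt T$ satisfying
\[
 N(T)\!\left(\frac{j}{2k}-\frac{j^2}{8k^3}-x\right)\in I_\ell\pmod{N(T)\Z}.
\]

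The second step is to rephrase these incidence conditions as lattice-point counts in a fixed region, after an $\SL(2,\R)$-transformation depending on $(x,T)$. Making the change of variables $p=j-2kx\pmod{2k}$, $v=k/\sqrt T$, the joint event $\{X_{N(T)}(x,I_\ell)=r_\ell : \ell=1,\dots,m\}$ becomes the event that a specific element $g_T(x)\in G$ carries $\Z^2$ into a lattice whose intersection with each of $m$ fixed triangular regions $\triangle_\ell\subset\R^2$ has cardinality exactly $r_\ell$. Here $\triangle_\ell$ encodes both the horizontal window $I_\ell$ and the vertical range $0<v\le 1$, while the shear part of $g_T(x)$ encodes the quadratic correction $-j^2/(8k^3)$. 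Crucially, as $x$ varies over $\T$ with $T$ fixed, the projection $\Gamma g_T(x)\in\Gamma\backslash G$ traces a closed horocycle whose length tends to infinity with $T$.

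The third step is to invoke the classical equidistribution of long closed horocycles on $\Gamma\backslash G$ (a special case of Ratner's measure classification): the push-forward of Lebesgue measure on $\T$ under $x\mapsto \Gamma g_T(x)$ converges weakly to the Haar probability measure $\mu$ on $\Gamma\backslash G$. Applied to the observable $\varphi(g)=\prod_\ell\chi_{\{X(g,\triangle_\ell)=r_\ell\}}$, this yields
\[
 X_\infty(\vecr,I)=\mu\bigl(\{g\in\Gamma\backslash G : X(g,\triangle_\ell)=r_\ell\ \forall\ell\}\bigr),
\]
which defines the claimed limit distribution on $\Z_{\ge 0}^m$.

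The main obstacle will be justifying convergence against this unbounded, discontinuous observable: $g\mapsto X(g,\triangle_\ell)$ can be arbitrarily large near the cusp of $\Gamma\backslash G$, so one needs a no-escape-of-mass estimate in the spirit of Proposition~\ref{prop:main_homogeneous_directions}, via Eisenstein series, to truncate at height $v\le R$ and then let $R\to\infty$. One must also verify that $\{X(\cdot,\triangle_\ell)=r_\ell\}$ is $\mu$-Jordan measurable, i.e.\ that the set of lattices with a point on $\partial\triangle_\ell$ is Haar-null; this reduces to a standard transversality argument.
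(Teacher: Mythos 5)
Your reduction targets the wrong homogeneous space, and this gap would sink the proof. You write that the counting condition becomes a lattice-point count for $g_T(x)\in G=\SL(2,\R)$ acting on $\Z^2$, with the quadratic correction $-j^2/(8k^3)$ ``encoded in the shear part of $g_T(x)$.'' But a shear (indeed any element of $\SL(2,\R)$) acts \emph{linearly} on lattice vectors $(j,k)$, so it cannot produce a condition that is quadratic in $j$. What actually happens is that the condition $\sqrt{n}\equiv x\pmod 1$, rewritten as $|n-(k+x)^2|$ small, forces one to translate the lattice by a quadratic function of $x$; the natural object is an \emph{affine} lattice $(\Z^2+\vecxi)M$, and the correct homogeneous space is $\Gamma'\backslash G'$ with $G'=\SL(2,\R)\ltimes\R^2$ and $\Gamma'=\SL(2,\Z)\ltimes\Z^2$, as in Section~\ref{sec:affine}. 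The relevant orbit is the closed orbit $\Gamma'\{\tilde n(u)\}_{u\in[-1,1)}\Phi^t$, where $\tilde n(u)=\bigl(n(u),(u/2,u^2/4)\bigr)$ includes both the unipotent shear and the quadratic affine shift; it projects to a closed horocycle in $\Gamma\backslash G$, but the essential content lives in the fibre.

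Consequently the equidistribution input is also misidentified. Equidistribution of long closed horocycles on $\Gamma\backslash G$ is classical (Eisenstein series, mixing, etc.), but that is not what is needed: one needs equidistribution of the translated \emph{nonlinear} horocycle $\Gamma'\tilde n(u)\Phi^t$ in $\Gamma'\backslash G'$. The one-parameter subgroup $\{\tilde n(u)\}$ is not the full expanding horospherical subgroup of $\Phi^t$ in $G'$ (its affine component $(u/2,u^2/4)\Phi^t$ has an expanding coordinate $u^2 e^{t/2}/4$), so this equidistribution genuinely requires Ratner's measure classification (this is precisely Elkies--McMullen's main technical point). Reducing to $\Gamma\backslash G$ would throw away the affine coordinate and make the problem look classical when it is not.

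One further confusion: for Theorem~\ref{th:prelim} the test function is $\varphi(g)=\prod_{\ell}\chi_{\{X(g,\triangle_\ell)=r_\ell\}}$, which is an indicator and hence \emph{bounded}; the issue is only its discontinuity, addressed by showing that the sets $\{X(\cdot,\triangle_\ell)=r_\ell\}$ have Haar-null boundary, exactly as you say. Escape of mass and the no-escape Proposition~\ref{prop:main_homogeneous_sqrt} become relevant only for the unbounded observable $X(\cdot,\triangle)^s$ in Theorem~\ref{th:main0}, not here. Your final paragraph conflates the two theorems.
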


Theorem \ref{th:prelim} states that the point process \[\{ N(T) (\xi_j-x)\}_{j\le N(T)}\] on the torus $\R/(N(T)\Z)$ converges, as $T\to\infty$, to a random point process on $\R$ which is determined by the probabilities $X_\infty(\vecr,I)$. As pointed out in \cite{marklof_strombergsson_free_path_length_2010}, this process is the same as for the directions of affine lattice points with irrational shift (see Section \ref{sec:directions}). It is described in terms of a random variable in the space of affine lattices and is in particular not a Poisson process. The second moments and two-point correlation function, however, coincide with those of a Poisson process with intensity $1$. 

It is important to note that Elkies and McMullen considered the full sequence $\{ \sqrt{n}\bmod 1 : 1\leq n \leq T\}$. Removing the perfect squares $n\in\Box$ does not have any effect on the limit distribution in Theorem  \ref{th:prelim}, since the set of $x$ for which  $X_{N(T)}(x,I)$ is different has vanishing Lebesgue measure as $T\to\infty$. In the case of the second and higher moments, however, the removal of perfect squares will make a difference and in particular avoid trivial divergence. 

For $I=I_1\times\cdots\times I_m\subset  \R^m$ and $\vecs=(s_1,\ldots,s_m)\in\R_{\ge 0}^m$ let
\begin{equation}
\M(T,\vecs):=\int_\T (X_{N(T)}(x,I_1))^{s_1} \cdots (X_{N(T)}(x,I_m))^{s_m} \, dx.
\end{equation}
The main objective of this section is to explain the convergence of these mixed moments to the corresponding moments of the limit process, where they exist. The case of the second mixed moment implies, by a standard argument, the convergence of the two-point correlation function stated in Theorem \ref{th:pair_sqrt}, cf.\ Appendix 1 of \cite{distribution_of_directions} and Lemma \ref{lem:X_to_pair}.

\begin{theorem}[El-Baz, Marklof, V. {\cite[Th.~3]{el-baz_two-point_2013}}]
\label{th:main0}
Let $I=I_1\times\cdots\times I_m\subset  \R^m$ be a bounded box, and $\lambda$ a Borel probability measure on $\T$ with continuous density. Choose  $\vecs=(s_1,\ldots,s_m)\in\R_{\ge0}^m$, such that $s_1+\dots+s_m<3$. Then,
\begin{equation}\label{eq:main_sqrt}
\lim_{T\to\infty} \M(T,\vecs) = \sum_{\vecr\in\Z_{\geq 0}^m} r_1^{s_1}\cdots r_m^{s_m} X_\infty(\vecr,I).
\end{equation}
\end{theorem}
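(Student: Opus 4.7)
The plan is to mirror the two-step strategy employed in the proof of Theorem \ref{th:main_directions}: use the joint distributional convergence of Theorem \ref{th:prelim} to control the truncated moments, then reduce the remaining tail to a no-escape-of-mass statement on the space of affine lattices.

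First I would introduce the truncated moments
\[
\M^{(K)}(T,\vecs) := \int_{\max_j X_{N(T)}(x,I_j)\le K}\!\! (X_{N(T)}(x,I_1))^{s_1}\cdots (X_{N(T)}(x,I_m))^{s_m}\,dx .
\]
Applied to the bounded functional $\vecr\mapsto \mathbf{1}\{|\vecr|\le K\}\,r_1^{s_1}\cdots r_m^{s_m}$, Theorem \ref{th:prelim} immediately gives
\[
\lim_{T\to\infty}\M^{(K)}(T,\vecs)=\sum_{|\vecr|\le K}r_1^{s_1}\cdots r_m^{s_m}\,X_\infty(\vecr,I).
\]
Because the Elkies--McMullen limit process coincides with the affine-lattice process of Section \ref{sec:directions}, property (f) of that section yields $\sum_{\vecr\in\Z_{\ge 0}^m}\|\vecr\|^{s}X_\infty(\vecr,I)<\infty$ for every $s<3$, so letting $K\to\infty$ on the right-hand side reproduces the target limit in \eqref{eq:main_sqrt}. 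Everything therefore reduces to the uniform tail bound
\[
\lim_{K\to\infty}\limsup_{T\to\infty}\bigl|\M(T,\vecs)-\M^{(K)}(T,\vecs)\bigr|=0.
\]

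To attack the tail, I would realize $X_{N(T)}(x,I_j)$ as a lattice-point counting function on $\Gamma'\backslash G'$, following the framework of Elkies--McMullen. Writing $n=m^2+j$ with $0\le j\le 2m$ and expanding $\sqrt{m^2+j}=m+j/(2m)-j^2/(8m^3)+O(j^3/m^5)$ produces, for each $T$, a map $x\mapsto g_T(x)\in\Gamma'\backslash G'$ together with test regions $\triangle_T(I_j)$ converging to the fixed triangles $\triangle(I_j)$ of \eqref{CcI}, such that $X_{N(T)}(x,I_j)=X(g_T(x),\triangle_T(I_j))$ up to boundary effects of vanishing measure. The orbits $\{g_T(x):x\in\T\}$ are closed horocyclic pieces pushed into the cusp as $T\to\infty$. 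Combined with a crude majorization by a fixed triangle in the spirit of \eqref{eq:crude_directions} and the cusp bound of Lemma \ref{lem:upper}, this yields the pointwise domination
\[
\prod_{j=1}^m X_{N(T)}(x,I_j)^{s_j}\le F_{R,\beta}(g_T(x))
\]
on the set $\{\max_j X_{N(T)}(x,I_j)>K\}$ for any $R\le R(K)$ diverging with $K$, with $\beta=\tfrac12(s_1+\dots+s_m)<3/2$ and $F_{R,\beta}$ the cusp function from \eqref{eq:bounding_function}.

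The main obstacle will then be establishing the no-escape-of-mass estimate
\[
\lim_{R\to\infty}\limsup_{T\to\infty}\int_\T F_{R,\beta}(g_T(x))\,\lambda(dx)=0 \qquad (\beta<3/2),
\]
i.e.\ the $\sqrt n$-analog of Proposition \ref{prop:main_homogeneous_directions}. Unfolding the sum defining $F_{R,\beta}$ against the cusp stabilizer $\Gamma_\infty'$ reduces this to horocyclic averages of precisely the type appearing in the proof of that proposition. Cusp excursions of $g_T(x)$ correspond to values of $\sqrt n$ anomalously close to integers, and these are controlled by the elementary bound $|\sqrt n-\ell|\ge 1/(2\sqrt n+1)$ valid for $n\notin\Box$---the very reason perfect squares were removed from $\scrP_T$ at the outset. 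This estimate plays the role of the Diophantine input in hypothesis (B2) of Proposition \ref{prop:main_homogeneous_directions} with effective type $\kappa=2$, and substitution gives exactly the threshold $\beta<1+1/\kappa=3/2$, equivalently $s_1+\dots+s_m<3$. Executing this Eisenstein-series analysis for the specific horocyclic measures supplied by the $\sqrt n$-construction, in parallel with \cite{marklof_pair_correlation_II_2002, marklof_mean_square_2005}, is the technical heart of the proof; once the no-escape estimate is in hand, the tail bound follows by combining it with the pointwise domination above, and the theorem follows.
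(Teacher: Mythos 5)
Your overall architecture is exactly the paper's: truncate at level $K$, apply the Elkies--McMullen joint limit theorem (Theorem~\ref{th:prelim}) to get convergence of the truncated moments, reduce the tail to a lattice-point count on $\Gamma'\backslash G'$ dominated by the cusp function $F_{R,\beta}$ with $\beta=\tfrac12\sum s_j<\tfrac32$, and finish with a no-escape-of-mass estimate for the relevant horocycle average. Up to and including the reduction to that last estimate, your proposal matches the paper's proof (the paper's Lemma~\ref{lem:the} supplies the domination by $X(\tilde n(\pm2x)\Phi^t,\triangle(I))$ and \eqref{eq:Fcontrol_sqrt} then gives the bound by $F_{R,\beta}$).

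The gap is in your account of the final, and hardest, step. You claim that the elementary bound $|\sqrt n-\ell|\ge 1/(2\sqrt n+1)$ for $n\notin\Box$ ``plays the role of the Diophantine input in hypothesis (B2) with effective type $\kappa=2$'' and that substitution into Proposition~\ref{prop:main_homogeneous_directions} yields the threshold $\beta<3/2$. This does not work. Proposition~\ref{prop:main_homogeneous_directions} controls escape of mass for \emph{linear} horocycle averages $u\mapsto(1,\vecxi)Mn(u)a(v)$ with a \emph{fixed} irrational shift $\vecxi$, and the Diophantine hypothesis is on that fixed $\vecxi$. In the $\sqrt n$ problem the relevant object is the translate of the \emph{non-linear} horocycle $\tilde n(u)=(n(u),(u/2,u^2/4))$, in which the affine part is a quadratic polynomial in the integration variable $u$ rather than a constant. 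There is no fixed shift to which a Diophantine exponent can be attached, and the bound on $|\sqrt n - \ell|$ only justifies excising the interval $|x|\le\tfrac13 T^{-1/2}$ from the integration range (this is precisely \eqref{zerro} in Lemma~\ref{lem:the}); it does not supply the input needed to control cusp excursions on the remaining range. The actual proof of Proposition~\ref{prop:main_homogeneous_sqrt} for $\beta\ge 1$ uses genuinely different number-theoretic inputs reflecting the quadratic nature of the translation: a count of solutions of $d^2\equiv j\pmod c$, and cancellation in Gauss sums. That $\beta<3/2$ numerically equals $1+1/\kappa$ at $\kappa=2$ is a coincidence, not a reduction; the argument you would need to carry out here is new and cannot be imported from the affine-lattice case by analogy.
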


\subsection{Strategy of proof}

The proof of Theorem \ref{th:main0} follows our strategy in the case of lattice translates \eqref{eq:main_directions2} and \eqref{eq:main_directions3}. We define the restricted moments
\begin{equation}
\M^{(K)}(T,\vecs):=\int_{\max_j X_{N(T)}(x,I_j)\leq K} \limits (X_{N(T)}(x, I_1))^{s_1} \cdots (X_{N(T)}(x,I_m))^{s_m}  dx .
\end{equation}
Theorem \ref{th:prelim} implies that, for any fixed $K\geq 0$,
\begin{equation}\label{eq:main_sqrt2}
\lim_{T\to\infty} \M^{(K)}(T,\vecs) = \sum_{\substack{\vecr\in\Z_{\geq 0}^m\\ |\vecr|\leq K}} r_1^{s_1}\cdots r_m^{s_m} X_\infty(\vecr,I),
\end{equation}
where $|\vecr|$ denotes the maximum norm of $\vecr$. To prove Theorem \ref{th:main0}, what remains is to show that 
\begin{equation}\label{eq:main_sqrt3}
\adjustlimits\lim_{K\to\infty} \limsup_{T\to\infty} \left|\M(T,\vecs)-\M^{(K)}(T,\vecs)\right| = 0 .
\end{equation}
To establish the latter, we use the inequality
\begin{equation}
\left|\M(T,\vecs)-\M^{(K)}(T,\vecs)\right|
\leq \int_{X_{N(T)}(x, \overline I)\geq K} (X_{N(T)}(x, \overline I))^{ s } dx
\end{equation}
where $\overline I=\cup_j I_j$ and $ s =\sum_j s_j$. As in the work of Elkies and McMullen, the integral on the right hand side can be interpreted as an integral over a translate of a non-linear horocycle in the space of affine lattices. The main difference is that now the test function is unbounded, and we require an estimate that guarantees there is no escape of mass as long as $ s <3$. This means that 
\begin{equation}\label{upperb}
\adjustlimits\lim_{K\to\infty} \limsup_{T\to\infty} \int_{X_{N(T)}(x, \overline I) \geq K} (X_{N(T)}(x, \overline I))^{ s } dx =0
\end{equation}
implies Theorem \ref{th:main0}.

\subsection{Escape of mass in the space of lattices\label{sec:affine}}
We proceed as in Section \ref{sec:directions}. 
Let $G=\SL(2,\R)$ and $\Gamma=\SL(2,\Z)$. Define the semi-direct product $G'=G\ltimes \R^2$ by 
\beq (M,\vecxi)(M',\vecxi')=(MM',\vecxi M'+\vecxi'),\eeq
and let $\Gamma'=\Gamma\ltimes \Z^2$ denote the integer points of this group. 
In the following, we will embed $G$ in $G'$ via the homomorphism $M\mapsto (M,\vecnull)$ and identify $G$ with the corresponding subgroup in $G'$.
We will refer to the homogeneous space $\Gamma\quot G$ as the space of lattices and $\Gamma'\quot G'$ as the space of affine lattices. A natural action of $G'$ on $\R^2$ is defined by $\vecx\mapsto \vecx(M,\vecxi):=\vecx M +\vecxi$.

Given an interval $I\subset\R$, define the triangle
\begin{equation}
\triangle (I) = \{ (x,y)\in\R^2 : 0< x<2, \; y\in 2 xI   \} .
\end{equation}
and set, for $g\in G'$ and any bounded subset $S\subset\R^2$,
\begin{equation}
X(g,S)= \# ( S \cap \Z^2 g) .
\end{equation}
By construction, $X(\cdot,S)$ is a function on the space of affine lattices, $\Gamma'\quot G'$.

Let 
\begin{equation}
\Phi^t =\begin{pmatrix} {e}^{-t/2} & 0 \\ 0 & {e}^{t/2} \end{pmatrix}, \qquad
\tilde n(u) = \bigg( \begin{pmatrix} 1 & u \\ 0 & 1 \end{pmatrix} , \bigg(\frac{u}{2}, \frac{u^2}{4}\bigg)\bigg).
\end{equation}
Note that $\{ \Phi^t \}_{t\in\R}$ and $\{ \tilde n(u) \}_{u\in\R}$ are one-parameter subgroups of $G'$. Note that $\Gamma' \tilde n(u+2) = \Gamma' \tilde n(u)$ and hence $\Gamma' \{ \tilde n(u) \}_{u\in[-1,1)} \Phi^t$ is a closed orbit in $\Gamma'\quot G'$ for every $t\in\R$.

\begin{lemma}\label{lem:the}
Given an interval $I\subset\R$, there is $T_0>0$ such that for all $T={e}^{t/2}\geq T_0$, $x \in[-\frac12,\frac12]$:
\begin{equation}\label{eq:crude_sqrt}
X_{N(T)}(x, I) \leq X\left(\tilde n(2x)\Phi^{t}, \triangle(I) \right)+ X\left(\tilde n(-2x)\Phi^{t}, \triangle(I) \right)
\end{equation}
and, for $-\frac13 T^{-1/2}\leq x\leq \frac13 T^{-1/2}$,  
\begin{equation} \label{zerro}
X_{N(T)}(x,I)= 0.
\end{equation}
\end{lemma}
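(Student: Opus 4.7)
I split the lemma into its two assertions.

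For the vanishing statement \eqref{zerro}, I would argue directly. Suppose some pair $(n, p) \in (\{1, \ldots, T\} \setminus \Box) \times \Z$ contributes to $X_{N(T)}(x, I)$ with $|x| \le \tfrac13 T^{-1/2}$. Writing $\sqrt n = p + x + \delta$ with $|\delta| \le \max|I|/N(T)$, and using $1 \le n \le T$ to conclude $1 \le p \le \sqrt T + O(T^{-1/2})$, I expand
\[
n - p^2 = 2p(x + \delta) + (x + \delta)^2.
\]
For $T \ge T_0$ large enough depending on $I$, we have $|x| + |\delta| \le \tfrac12 T^{-1/2}$, so $|2p(x + \delta)| \le 2\sqrt T \cdot \tfrac12 T^{-1/2} = 1$, with a strict gap for $T$ large, while $(x + \delta)^2 = O(T^{-1})$ is negligible. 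Hence $|n - p^2| < 1$, forcing $n = p^2 \in \Box$, which contradicts $n \notin \Box$. Therefore $X_{N(T)}(x, I) = 0$.

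For the inequality \eqref{eq:crude_sqrt}, the plan is to execute the ``cone-to-triangle'' inclusion in analogy with the directions case \eqref{eq:crude_directions}: I would exhibit, for each pair $(n, p)$ counted by $X_{N(T)}(x, I)$, a sign $\sigma \in \{+, -\}$ and an integer pair $(a, b) \in \Z^2$ such that the image of $(a, b)$ under $\tilde n(2\sigma x)\Phi^t$ lies inside $\triangle(I)$, and then argue that the assignment $(n, p) \mapsto (\sigma, a, b)$ is injective, so that summing the two lattice-point counts bounds $X_{N(T)}(x, I)$ from above. The natural choice is $(a, b) = (p, p^2 - n)$, producing the point $((p + \sigma x)/T,\, T((p + \sigma x)^2 - n))$; its $y_1$-coordinate is comfortably inside $(0, 2)$ because $1 \le p + \sigma x \le \sqrt T + o(1)$, and the required condition $y_2 \in 2 y_1 I$ is then read off the Taylor expansion
\[
(p + \sigma x)^2 - n = -2(p + \sigma x)\delta_\sigma + O(\delta_\sigma^2),
\]
where $\delta_\sigma = \sqrt n - (p + \sigma x)$ satisfies $|N(T)\delta_\sigma| \le \max|I|$ for the appropriate sign.

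The two-sign doubling on the right encodes the two possible integer shifts realizing the cyclic distance $\|\sqrt n - x\|_{\T}$, which is exactly what guarantees that at least one choice of $\sigma$ yields a lattice point in $\triangle(I)$. The main technical obstacle is the careful geometric verification of the inclusion with the precise constants in the triangle's definition $\{0 < y_1 < 2,\, y_2 \in 2 y_1 I\}$, in particular absorbing the quadratic remainder $O(\delta_\sigma^2)$ and the discrepancy $T - N(T) = O(\sqrt T)$ into the choice $T \ge T_0$; the factor-of-two slack in the upper bound on $y_1$ (as opposed to the $(0,1)$ range used for the directions triangle) is precisely what accommodates the two-sign construction.
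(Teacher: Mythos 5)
Your argument for the vanishing statement \eqref{zerro} is sound and is essentially the same observation the paper makes: a non-square $n\le T$ has $\mathrm{dist}(\sqrt n,\Z)\gg T^{-1/2}$, so for $|x|\le\frac13 T^{-1/2}$ and $T$ large no term can land in $I/N(T)$. One cosmetic remark: the intermediate inequality $|x|+|\delta|\le\tfrac12 T^{-1/2}$ only yields $|2p(x+\delta)|\le 1$, which is not a strict gap; you should keep the $\tfrac13$ from the hypothesis, which gives $|2p(x+\delta)|\le \tfrac23+O(T^{-1/2})<1$.

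For the inequality \eqref{eq:crude_sqrt} the paper does not supply a self-contained argument --- it cites Elkies--McMullen and Marklof's survey --- so the comparison is necessarily against the references rather than the paper. Your overall strategy (pass from the pair $(n,p)$ to the lattice point $(p,p^2-n)$, push it forward by $\tilde n(2\sigma x)\Phi^t$, and land in the triangle via a quadratic Taylor expansion) is the correct one, but the sketch has two concrete problems that would surface the moment you ``read off the Taylor expansion'' as you claim to. First, with $T=e^{t/2}$ as written, $\Phi^t=\mathrm{diag}(1/T,T)$, so the pushed-forward point is $\bigl((p+\sigma x)/T,\ T((p+\sigma x)^2-n)\bigr)$; its $y_1$-coordinate is of order $T^{-1/2}$ while $y_2\approx -2T(p+\sigma x)\delta$ with $\delta\in I/N(T)\approx I/T$, so $y_2$ is of order $1$ while the triangle condition $y_2\in 2y_1 I$ forces $y_2$ to be of order $T^{-1/2}$ --- the two scales do not match. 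The consistent convention (and the one used later in the paper, where $v=1/T$ and $a(v)=\Phi^t$) is $T=e^t$, i.e.\ $\Phi^t=\mathrm{diag}(T^{-1/2},T^{1/2})$; then $y_1=(p+\sigma x)/\sqrt T$ really does range through $(0,1+o(1))\subset(0,2)$ and the scales of $y_2$ and $2y_1 I$ agree. Second, even with that fix there is a sign defect: $y_2=\sqrt T\bigl((p+\sigma x)^2-n\bigr)=-\sqrt T\,\delta_\sigma(\sqrt n+p+\sigma x)$, so $y_2$ has the \emph{opposite} sign to $\delta_\sigma$, and the containment $y_2\in 2y_1 I$ amounts to $-T\delta_\sigma(1+O(T^{-1}))\in I$, whereas the hypothesis is $N(T)\delta_\sigma\in I$. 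For an interval $I$ not symmetric about $0$ the constructed point therefore lands in $\triangle(-I)$, not $\triangle(I)$; you need either to symmetrise $I$ (harmless for the downstream upper bound, but then the statement should say so) or to track the sign convention explicitly. Finally, your explanation of the two-sign doubling --- that it encodes the two integer shifts realising the cyclic distance $\|\sqrt n-x\|_\T$ --- does not hold water in the Poisson regime: for $T$ large only one integer $p$ satisfies $|\sqrt n-p-x|\ll 1/T$, so both putative shifts never contribute simultaneously; the doubling has a different origin in the Elkies--McMullen construction and your heuristic justification for it (and for the $(0,2)$ width of the triangle) should not be presented as a proof.
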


\begin{proof}
The bound \eqref{eq:crude_sqrt} follows from the more precise estimates in \cite{ElkiesMcM04}; cf.~also \cite[Sect.~4]{marklof_survey}. The second statement \eqref{zerro} follows from the observation that the distance of $\sqrt n$ to the nearest integer, with $n\leq T$ and not a perfect square, is at least $\frac12 (n+1)^{-1/2}\geq \frac12(T+1)^{-1/2}$.
\end{proof}

We show in Section \ref{sec:directions} that there is a choice of a continuous function $f\geq 0$ with compact support, such that for $\beta=\frac12  s $, and $v\geq R$ with $R$ sufficiently large, we have
\begin{equation}\label{eq:Fcontrol_sqrt}
(X(g,\triangle(I)))^ s  \leq F_{R,\beta}(g)=F_{R,\beta}\left(\tau; \vecxi\right).
\end{equation}
Here $F_{R,\beta}$ is the bounding function defined in \eqref{eq:bounding_function}. 

The following proposition establishes under which conditions there is no escape of mass in the equidistribution of translates of non-linear horocycles. In view of Lemma \ref{lem:the} and \eqref{eq:Fcontrol_sqrt}, it implies \eqref{upperb} and thus Theorem \ref{th:main0}. We write $v=1/T$ and note that $\frac{\beta}{2(\beta-1)}>\frac12$ so the choice $\eta=\frac12$ is always permitted.

\begin{prop}\label{prop:main_homogeneous_sqrt}
Assume $f$ is continuous and has compact support. Let $0\leq \beta <\frac32$. 
Then
\beq 
\adjustlimits\lim_{R\to \infty}\limsup_{v\to 0} \bigg| \int F_{R,\beta}\left(\tilde n(u)a(v)\right) du \bigg|=0
\eeq
where the range of integration is $[-1,1]$ for $\beta<1$, and $[-1,-\theta v^{\eta}]\cup [\theta v^{\eta},1]$ for $\beta\geq 1$ and any $\eta \in [0,\frac{\beta}{2(\beta-1)})$, $\theta\in(0,1)$.
\end{prop}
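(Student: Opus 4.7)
My plan is to unfold the definition of $F_{R,\beta}$ along cosets of $\Gamma_\infty'\backslash\Gamma'$ and estimate each resulting integral after straightening the non-linear horocycle. The quotient $\Gamma_\infty'\backslash\Gamma'$ is in bijection with triples $((c,d),n)\in\Z^2_{\mathrm{prim}}\times\Z$, where $n:=m_1d-m_2c$ encodes the translation class $\vecm\in\Z^2/\Z(c,d)$; a direct Iwasawa computation gives
\[
f_\beta(\gamma\tilde n(u)a(v))=f\bigl((n+\tfrac12 du+\tfrac14 cu^2)\,v_\gamma^{1/2}\bigr)\,v_\gamma^{\beta}\,\chi_R(v_\gamma),\quad v_\gamma=\frac{v}{(cu+d)^2+c^2v^2}.
\]
The cosets with $c=0$ force $v_\gamma=v<R$ and drop out. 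For $c\ge 1$, the substitutions $w=cu+d$ and then $w=cvy$, combined with the identity $n+\tfrac12 du+\tfrac14 cu^2=\alpha+w^2/(4c)$ with $\alpha:=n-d^2/(4c)$, transform the $u$-integral into
\[
\frac{v^{1-\beta}}{c^{2\beta}}\int_{-Y}^{Y}f\!\biggl(\frac{\alpha+cv^2y^2/4}{cv^{1/2}(y^2+1)^{1/2}}\biggr)\frac{dy}{(y^2+1)^\beta},\qquad Y=\sqrt{(c^2vR)^{-1}-1}.
\]

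The kernel $(y^2+1)^{-\beta}$ is integrable on $\R$ (since $\beta>1/2$), and the compact support $[-T_0,T_0]$ of $f$ localizes the contribution. I split the sum over cosets into a \emph{close} regime $|\alpha|\le T_0 cv^{1/2}$, where the $y$-integral is $O(1)$, and a \emph{far} regime $T_0 cv^{1/2}<|\alpha|\le T_0/\sqrt R$, where it is $O((cv^{1/2}/|\alpha|)^{2\beta-1})$ (larger $|\alpha|$ giving vanishing contribution). In the far regime, bounding $\sum_n|\alpha|^{-(2\beta-1)}$ by a Riemann integral ($\alpha$ has integer spacing in $n$) and then summing trivially over $|d|\le c$ and $c\le(vR)^{-1/2}$ yields a total of order $R^{(2\beta-3)/2}$, uniform in $v$ and vanishing as $R\to\infty$ because $\beta<3/2$. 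The close regime contributes $v^{1-\beta}/c^{2\beta}$ per coset, and summing over $d$ requires the lattice-point count of those $d$ for which $\|d^2/(4c)\|\le T_0 cv^{1/2}$.

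For $\beta<1$, even the crudest count (at most $2c$ values of $d$) gives $\sum_{c\le(vR)^{-1/2}}v^{1-\beta}/c^{2\beta-1}=O(R^{\beta-1})$, uniform in $v$. For $\beta\ge 1$ the Eisenstein-type sum diverges, and the divergence is carried by the single coset $(c,d)=(1,0)$, whose excursion into the cusp of $\Gamma'\backslash G'$ sits precisely at $u=0$; the role of the excluded window $|u|<\theta v^\eta$ is to excise exactly this excursion. The substitution $u=v\tau$ on the $(1,0)$-piece over $[\theta v^\eta,1]$ yields the residual bound $v^{\beta(1-2\eta)+\eta}$ (up to constants), which vanishes as $v\to 0$ for $\eta$ in the prescribed range, and the contributions from the remaining cosets whose excursion merely tangentially overlaps the excluded window are dominated by this main term.

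The technical heart of the argument, and the place where the sharp threshold $\beta<3/2$ is truly used, is the lattice-point estimate in the close regime: one must count $d\in[-c,c]$ with $\|d^2/(4c)\|\le T_0 cv^{1/2}$ uniformly in $c$ and $v$, showing that the resulting sum behaves like $v^{3/2-\beta}\sum_{c\le(vR)^{-1/2}}c^{2-2\beta}$ and combines with the per-coset weights to give $R^{(2\beta-3)/2}$ with the $v$-powers canceling. It is precisely the quadratic dependence on $u$ in $u\mapsto n+\tfrac12 du+\tfrac14 cu^2$---inherited from $\tilde n(u)=(n(u),(u/2,u^2/4))$ and ultimately from the identity $(\sqrt n)^2=n$---that produces this improved threshold $s<3$ of Theorem~\ref{th:main0}, compared with the weaker $s<2$ of the Diophantine-free case in Proposition~\ref{prop:main_homogeneous_directions}.
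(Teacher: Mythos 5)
Your unfolding and bookkeeping are the right strategy and match the paper's: the coset parametrisation of $\Gamma'_\infty\backslash\Gamma'$ by $(c,d,n)$, the Iwasawa computation $\xi^1_\gamma=n+\tfrac12 du+\tfrac14 cu^2$, $v_\gamma=v/((cu+d)^2+c^2v^2)$, the completion of the square $\xi^1_\gamma=\alpha+w^2/(4c)$ with $\alpha=n-d^2/(4c)$, the two substitutions, the close/far split based on $|\alpha|$ against $cv^{1/2}$, the observation that $c=0$ drops out, and the isolation of the $(c,d)=(1,0)$ excursion as the coset removed by the window $|u|<\theta v^\eta$, are all correct. The crude $2c$ count for $\beta<1$ and the final power counting $v^{3/2-\beta}\sum_{c\le (vR)^{-1/2}}c^{2-2\beta}=O(R^{(2\beta-3)/2})$ are also correct. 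But the step you label the ``technical heart'' is not an estimate you can wave at: it is the entire content of the proposition in the range $\beta\in[1,3/2)$, and in the paper it rests on a separate lemma with two number-theoretic inputs, namely (i) a bound on the number of solutions of $d^2\equiv j\pmod c$ and (ii) cancellation in Gauss sums. You mention neither, and merely assert that the count of $d\in[-c,c]$ with $\|d^2/(4c)\|\le T_0 cv^{1/2}$ ``behaves like'' $c^2v^{1/2}$. The divisor-type bound from (i) alone gives only $\ll c^{2+\epsilon}v^{1/2}$, and feeding that into your sum yields $v^{-\epsilon/2}R^{-(3+\epsilon-2\beta)/2}$, whose $\limsup_{v\to 0}$ is infinite for every $\epsilon>0$: so the pointwise count genuinely does not suffice, and the Gauss-sum averaging over $c$ is what rescues the argument. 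This is a real gap, not a routine omission.

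A secondary issue: in the far regime you propose ``bounding $\sum_n|\alpha|^{-(2\beta-1)}$ by a Riemann integral.'' For fixed $(c,d)$ and $R$ large there is at most one integer $n$ with $|\alpha|\le T_0/\sqrt R$, namely the one realising $\|d^2/(4c)\|$; the Riemann integral over $\alpha$ with unit spacing is then not a valid upper bound for the single surviving term, whose size can be as large as $(T_0 cv^{1/2})^{1-2\beta}\gg(T_0cv^{1/2})^{2-2\beta}$. What you actually need is the dyadic count of $d$ with $\|d^2/(4c)\|\sim A$, which is again the same number-theoretic lemma. So the far regime is not ``trivial'' as written; both regimes funnel into the one hard count. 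Finally, your assertion that the divergence for $\beta\ge1$ ``is carried by the single coset $(1,0)$'' is imprecise: removing $(1,0)$ removes the only coset with $\alpha=0$, but the remaining sum is still out of control under the crude or divisor-bounded count, and the excision plus the Gauss-sum lemma are both needed. Your residual bound $v^{\beta(1-2\eta)+\eta}$ for the $(1,0)$ piece is correct as a computation.
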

% 
% The proof of this proposition is organized in three parts: the proof for $\beta< 1$, a key lemma, and finally the proof for $1\le \beta< \frac32$. In the following we assume without loss of generality that $f$ is nonnegative,  even, and that $ f(rx) \le f(x)$ for all $r\ge 1$ and all $x\in\R$.

Note that the removal of an interval around zero from the range of integration is innocuous as we already know from Lemma \ref{lem:the} that $X_{N(T)}(x,I)$ vanishes there. The proof  in the regime $\beta<1$ is identical to the one in the case of directions in an affine lattice \cite{distribution_of_directions}. When $\beta\ge 1$, we need to control excursions to the cusp. This is done using a Lemma that has two inputs, both of number-theoretic origin; the first is that there are not too many solutions to the equation $d^2\equiv j\pmod c$ for a given $j$, and the second is cancellation in Gauss sums.

\parindent=0pt
\parsep=0pt
\parskip=0pt

\bibliographystyle{plain}
\bibliography{bibliography}

\small
\parindent=0pt
\parsep=0pt
\parskip=0pt

\textsc{Ilya Vinogradov, School of Mathematics, University of Bristol, Bristol BS8~1TW, U.K.} \texttt{ilya.vinogradov@bristol.ac.uk}

\end{document}